\newtheorem{thm}{Theorem}
\newtheorem{lem}[thm]{Lemma}
\newtheorem{prop}[thm]{Proposition}
\newtheorem{defn}{Definition}
\newcommand*{\sh}{{\,\llcorner\!\llcorner\!\!\!\lrcorner\,}}
\newcommand*{\un}{{\mathbf 1}}
\def\shuff#1#2{\mathbin{
      \hbox{\vbox{\hbox{\vrule \hskip#2 \vrule height#1 width 0pt}\hrule}\vbox{\hbox{\vrule \hskip#2 \vrule height#1 width 0pt\vrule }\hrule}}}}
\def\shuffl{{\mathchoice{\shuff{5pt}{3.5pt}}{\shuff{5pt}{3.5pt}}{\shuff{3pt}{2.6pt}}{\shuff{3pt}{2.6pt}}}}
\def\shuffle{{\, \shuffl \,}}
\begin{document}

\title{Cumulants, free cumulants and half-shuffles}

\author{Kurusch Ebrahimi-Fard}
\address{ICMAT,
		C/Nicol\'as Cabrera, no.~13-15, 28049 Madrid, Spain.
		On leave from UHA, Mulhouse, France}
         \email{kurusch@icmat.es}         
         \urladdr{www.icmat.es/kurusch}

\author{Fr\'ed\'eric Patras}
\address{Laboratoire J.-A.~Dieudonn\'e
         		UMR 6621, CNRS,
         		Parc Valrose,
         		06108 Nice Cedex 02, France.}
\email{patras@math.unice.fr}
\urladdr{www-math.unice.fr/$\sim$patras}

\date{26-Jan-2015}

\voffset=10ex

\begin{abstract}

Free cumulants were introduced as the proper analog of classical cumulants in the theory of free probability. There is a mix of similarities and differences, when one considers the two families of cumulants. Whereas the combinatorics of classical cumulants is well expressed in terms of set partitions, the one of free cumulants is described, and often introduced in terms of non-crossing set partitions. The formal series approach to classical and free cumulants also largely differ.

It is the purpose of the present article to put forward a different approach to these phenomena. Namely, we show that cumulants, whether classical or free, can be understood in terms of the algebra and combinatorics underlying commutative as well as non-commutative (half-)shuffles and (half-)unshuffles. As a corollary, cumulants and free cumulants can be characterized through linear fixed point equations. We study the exponential solutions of these linear fixed point equations, which display well the commutative, respectively non-commutative, character of classical, respectively free, cumulants.

\end{abstract}


\maketitle

\keywords{Keywords: cumulants, free cumulants, double bar construction, half-shuffles, unshuffle coalgebra, cluster property, random matrices, master equation.}

\tableofcontents


\section{Introduction}
\label{sect:intro}

D.~Voiculescu introduced in the 1980s the theory of free probability \cite{DVoi}. In this theory the classical concept of probabilistic independence is replaced by the algebraic notion of freeness, i.e., the absence of relations. Briefly, the definition of a non-commutative probability space consist of a pair $(A,\phi)$, where $A$ is a complex algebra with unit $1_A$. The map $\phi$ is a $\mathbb{C}$-valued linear form on ${A}$, such that $\phi(1_{A}) = 1$. The elements of $A$ play the role of random variables, while the map $\phi$ should be considered as the expectation map, similar to classical probability theory. Let $I$ be a set of indices, and ${B}_i$, for $i\in I$, be subalgebras of $A$, containing the unit. The family of algebras ${B}_i$, $i\in I$, will be called free if $\phi(a_1\cdots a_n)= 0$ every time $\phi(a_j)=0$ $\forall j=1,\dots,n$ and $a_j\in B_{i_j}$ for some indices $i_1\neq i_2\neq\dots\neq i_n$.

R.~Speicher introduced the notion of free cumulants as the proper analog of classical cumulants in the theory of free probability. See e.g. \cite{SpeicherNica}, the standard reference on the subject. There is a mix of similarities and differences between the two families of cumulants. Indeed, whereas the combinatorics of classical cumulants is naturally expressed in terms of set partitions, the one of free cumulants is described and often introduced in terms of non-crossing set partitions. The formal series approach to cumulants and free cumulants also largely differ.

It is the purpose of the present article to develop a different approach to the algebraic and combinatorial structures underlying free and classical cumulants. Namely, we show that cumulants, both classical and free, can be understood algebraically in terms of (co-)commutative and non-(co-)commutative (un-)shuffles. As a corollary, cumulants and free cumulants happen to solve linear fixed point equations. 

Our approach to free cumulants involves Hopf algebraic structures. The general idea of investigating the combinatorics of free probabilities using such structures is not new and was developed by M.~Mastnak and A.~Nica in their seminal work on the logarithm of the $S$-transform \cite{Mastnak}. However, it should be pointed out that our work differs from theirs in its scope as well as in the particular Hopf algebra structures under consideration. Although the relations between free moments and free cumulants appear in their work, e.g., through the relations between the $S$- and $R$-transforms, their work aims at understanding how multiplication of freely independent $k$-tuples in a non-commutative space is reflected in free multiplicative convolution on the corresponding distributions; it involves Hopf algebras isomorphic to the one of symmetric functions and higher dimensional generalizations thereof. We are interested instead in unraveling the very definition of free cumulants and construct a Hopf algebra structure directly out of the underlying non-commutative probability space. To this end, compare, e.g., our Definition \ref{def:coproduct} with \cite[sect. 3]{Mastnak}.

In the present article we focus on the moment/cumulant relationship from an algebraic point of view. We show that the aforementioned linear fixed point equations can be solved in terms of proper exponentials using the pre-Lie Magnus expansion \cite{dendeq,EFM}. At this level, the basic difference between classical and free cumulants can be described analogously to the case of scalar- versus matrix-valued linear initial value problems. Indeed, classical cumulants correspond to an exponential solution of a linear fixed point equation in a commutative setting, whereas free cumulants correspond to solutions in a non-commutative context.      

\medskip 

In the following $k$ denotes a ground field of characteristic zero. This is basically the interesting case, and a convenient hypothesis to avoid cumbersome distinctions. However, we point out that this assumption is not strictly necessary for all the results in the article. Indeed, many equations we will consider are defined and can be solved over the integers. We also assume  any $k$-algebra $A$ to be associative and unital, if not stated otherwise. The unit in $A$ is denoted $\un$. Identity morphisms are written $I$.

\vspace{0.4cm}

\noindent {\bf{Acknowledgements}}: The first author is supported by a Ram\'on y Cajal research grant from the Spanish government. The second author acknowledges support from the grant ANR-12-BS01-0017, Combinatoire Alg\'ebrique, R\'esurgence, Moules et Applications and from the ESI Vienna, where this work was partially realized. Support by the CNRS GDR Renormalisation is also acknowledged. We would like to thank the referees for helpful comments.


\section{Shuffle algebras}
\label{section:shudend}

Recall first various classical results and definitions related to shuffle algebras. In the classical literature, shuffles refer to the (commutative) combinatorial shuffles arising from products of (functional) iterated integrals that also appear in the theory of free Lie algebras \cite{Reutenauer}. They refer, however,  as well to topological shuffles, the latter being non-commutative (they are commutative only up to homotopy). These notions can be traced back at least to the 1950's -- the period in which both families of shuffle products were axiomatized in the works of Eilenberg--MacLane and Sch\"utzenberger \cite{EM,schu}.

Since we will be interested mainly in the non-commutative case, we will use the name ``shuffle algebra''  to denote general, possibly non-commutative, shuffle algebras and refer explicitly to ``commutative shuffle algebras'' in the commutative case.

Recall the definition of a {\it{shuffle}}, or {\it{dendrimorphic}}\footnote{Note that we prefer the word ``dendrimorphic'' over the commonly used terminology, ``dendriform'', which fails to meet the standard criteria of name-giving in mathematics by mixing greek and latin roots. In general, the terminology of shuffles seems, both for historical and conceptual reasons, more natural than the one of dendrimorphic structures -- we tend therefore to favour it.} algebra. It is a $k$-vector space $D$ together with two bilinear compositions $\prec$ and $\succ$ (the left and right half-shuffle products) subject to three axioms
\begin{eqnarray}
	(a\prec b)\prec c  &=& a\prec(b \shuffle c)        	\label{A1}\\
  	(a\succ b)\prec c  &=& a\succ(b\prec c)   		\label{A2}\\
   	a\succ(b\succ c)   &=& (a \shuffle b)\succ c        	\label{A3},
\end{eqnarray}
where the bilinear product 
\begin{equation}
	a \shuffle b:=\ a \prec b + a \succ b. \label{dendassoc}       
\end{equation}
We call $\shuffle$ the shuffle product on $D$.

A {\it{commutative shuffle algebra}}, sometimes also called Zinbiel algebra (in reference to the Bloh--Cuvier \cite{bloh,cuv1,cuv2} dual notion of Leibniz algebra, but we will stick to the classical terminology), is a shuffle algebra, where the left and right half-shuffles are identified:
$$	
	x \succ y = y \prec x ,
$$
so that in particular the shuffle product $\shuffle$ is then commutative: $x\shuffle y=x\prec y+x\succ y=y\shuffle x$. The standard example of a commutative shuffle algebra is provided by the tensor algebra, $\overline{T}(V):=\oplus_{n \ge 0} V^{\otimes n}$, over a $k$-vector space $V$ equipped with the left half-shuffle product recursively defined by (the definition of the right half-shuffle follows from commutativity $x \succ y = y \prec x$):
$$
	x_1\otimes \cdots \otimes x_n \prec y_1\otimes \cdots \otimes y_m:=
			x_1\otimes (x_2 \otimes \cdots \otimes x_n\ \shuffle\ y_1\otimes \cdots y_m).
$$

The axioms (\ref{A1}-\ref{A3}) imply that any shuffle algebra is an associative algebra for the shuffle product (\ref{dendassoc}). This observation actually underlies the classical and celebrated abstract proof of the associativity of the topological shuffle products by Eilenberg--MacLane \cite{EM}. Let us mention that one could actually show that the axioms of shuffle algebras encode exactly products of topological simplices. This is due to the equivalence between the computation of these products and computations in symmetric group algebras, i.e., in the Malvenuto--Reutenauer Hopf algebra \cite{MR,Patgeom}, together with the property of the latter to be free as a shuffle algebra \cite{Foissy}. 

Let us introduce some useful notations. Let $L_{a \succ} \left( b \right) = a \succ b = R_{\succ b} \left( a \right) $. The axioms yield
$$	
	L_{a \succ} L_{b \succ} = L_{a \shuffle b \succ}, 
	\qquad\ 
	R_{\prec a} R_{\prec b} = R_{\prec b \shuffle a}.
$$

Recall that a left pre-Lie algebra \cite{Cartier3, Manchon} is a $k$-vector space $V$ equipped with a bilinear product $\vartriangleright$, such that for arbitrary $a,b,c \in V$ 
\begin{equation}
\label{pLrel}
	a \vartriangleright (b\vartriangleright c) - (a\vartriangleright b) \vartriangleright c
		= b \vartriangleright (a\vartriangleright c) - (b\vartriangleright a) \vartriangleright c.
\end{equation}
It implies that the bracket $[a,b]:= a \vartriangleright b - b \vartriangleright a$ satisfies the Jacobi identity. For several reasons, largely due to the general theory of integration encoded by Rota--Baxter algebras (see \cite{EPnew}), pre-Lie algebras play a key role, e.g., in the understanding of recursive equations such as Bogoliubov's counterterm formula in perturbative quantum field theory. The next lemma follows directly from the axioms (\ref{A1}-\ref{A3}) of shuffle products.

\begin{lem}
Let $D$ be a shuffle algebra. The product $\vartriangleright : D \otimes D \to D$ 
$$
	a \vartriangleright b := a \succ b - b \prec a 
$$
is left pre-Lie. We write its left action $L_{a \vartriangleright} \left( b \right) = a \vartriangleright b=L_{a \succ} - R_{\prec a}$.
\end{lem}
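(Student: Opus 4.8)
The plan is to verify the pre-Lie relation \eqref{pLrel} directly by expanding the product $\vartriangleright$ in terms of the two half-shuffles and reducing everything, via the three shuffle axioms \eqref{A1}--\eqref{A3}, to expressions involving only the associative shuffle product $\shuffle$. Concretely, I would compute the associator $a\vartriangleright(b\vartriangleright c)-(a\vartriangleright b)\vartriangleright c$ by substituting $a\vartriangleright b=a\succ b-b\prec a$ throughout, obtaining eight terms built from nested half-shuffles. The key observation that makes this manageable is that each axiom converts a nested expression into one where an argument has been replaced by a shuffle product: \eqref{A1} handles patterns $(\cdot\prec\cdot)\prec\cdot$, \eqref{A3} handles $\cdot\succ(\cdot\succ\cdot)$, and \eqref{A2} relates the mixed pattern $(\cdot\succ\cdot)\prec\cdot$. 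So the strategy is to apply these three rules term-by-term until every summand is written as a single half-shuffle whose two arguments are among $a$, $b$, $c$, and shuffle-products thereof.

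First I would expand $a\vartriangleright(b\vartriangleright c)=a\succ(b\succ c)-a\succ(c\prec b)-(b\succ c)\prec a+(c\prec b)\prec a$, then rewrite each term: by \eqref{A3}, $a\succ(b\succ c)=(a\shuffle b)\succ c$; by \eqref{A2}, $(b\succ c)\prec a=b\succ(c\prec a)$; by \eqref{A1}, $(c\prec b)\prec a=c\prec(b\shuffle a)$; and the term $a\succ(c\prec b)$ is already in reduced form. I would then do the same for $(a\vartriangleright b)\vartriangleright c$, where the outer $\vartriangleright$ has the composite $a\vartriangleright b$ in its first slot, again reducing with the axioms. Having both sides expressed purely through single half-shuffles of $a,b,c,a\shuffle b,b\shuffle a,\dots$, I would subtract to get the left-hand side of \eqref{pLrel}.

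The decisive step is then to observe that the resulting expression is manifestly symmetric in $a$ and $b$. This is exactly the content of the pre-Lie identity, since \eqref{pLrel} says precisely that the associator of $\vartriangleright$ is symmetric under the exchange of its first two arguments. Concretely, the terms surviving the reduction should pair up so that swapping $a\leftrightarrow b$ leaves the associator invariant; the commutativity of $\shuffle$ is not assumed, but the combinations entering the final expression involve the symmetric data in the right way for the $a\leftrightarrow b$ symmetry to hold. I would verify this symmetry by inspection of the reduced terms rather than by re-deriving the right-hand side independently.

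The main obstacle I anticipate is purely bookkeeping: keeping the orientation of the half-shuffles and the order of arguments in the $\shuffle$-products straight, since $\shuffle$ is noncommutative here and sign errors or transposed arguments are easy to make across eight-plus terms. The conceptual content is light—no axiom beyond \eqref{A1}--\eqref{A3} is needed, and no associativity of $\shuffle$ is invoked beyond what those axioms already encode—so the proof reduces to a careful, disciplined expansion. The final assertion about the Jacobi identity for $[a,b]=a\vartriangleright b-b\vartriangleright a$ is then immediate from the standard fact that the antisymmetrization of any left pre-Lie product is a Lie bracket, and the identification $L_{a\vartriangleright}=L_{a\succ}-R_{\prec a}$ is just the definition of $\vartriangleright$ read off the two half-shuffles.
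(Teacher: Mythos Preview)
Your proposal is correct and is exactly the direct verification the paper has in mind: the paper gives no proof beyond the remark that the lemma ``follows directly from the axioms \eqref{A1}--\eqref{A3},'' and your expansion-and-reduction strategy is precisely that computation. Carrying it out yields the associator $(a\prec b)\succ c + (b\prec a)\succ c - a\succ(c\prec b) - b\succ(c\prec a) + c\prec(a\succ b) + c\prec(b\succ a)$, which is visibly symmetric in $a,b$ without any appeal to commutativity of $\shuffle$.
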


\noindent Note that $[a,b]= a \vartriangleright b - b \vartriangleright a = a \shuffle b - b \shuffle a$ for all $a,b \in D$. The pre-Lie product is trivial (null) on commutative shuffle algebras, since we then have $a\succ b=b\prec a$.

Shuffle algebras are not naturally unital. This is because it is impossible to ``split'' the unit equation, $\un \shuffle a=a\shuffle \un=a$, into two equations involving the half-shuffle products $\succ$ and $\prec$. This issue is circumvented by using the ``Sch\"utzenberger trick'', that is, for $D$ a shuffle algebra, $\overline D := D \oplus k.\un$ denotes the shuffle algebra augmented by a unit $\un$, such that
\begin{equation}
\label{unit-dend}
    a \prec \un := a =: \un \succ a
    \hskip 12mm
    \un \prec a := 0 =: a \succ \un,
\end{equation}
implying $a\shuffle\un=\un\shuffle a=a$. By convention, $\un \shuffle \un=\un$, but $\un \prec \un$ and $\un \succ \un$ cannot be defined consistently in the context of the axioms of shuffle algebras. 

The following set of left and right half-shuffle words in $\overline{D}$ are defined recursively for fixed elements $x_1,\ldots, x_n \in D$, $n \in \mathbb{N}$
 \allowdisplaybreaks{
\begin{eqnarray*}
    w^{(0)}_{\prec}(x_1,\ldots, x_n) &:=& \un =:w^{(0)}_{\succ}(x_1,\ldots, x_n) \\
    w^{(n)}_{\prec}(x_1,\ldots, x_n) &:=& x_1 \prec \bigl(w^{(n-1)}_\prec(x_2,\ldots, x_n)\bigr)\\
    w^{(n)}_{\succ}(x_1,\ldots, x_n) &:=& \bigl(w^{(n-1)}_\succ(x_1,\ldots, x_{n-1})\bigr)\succ x_n.
\end{eqnarray*}}
In case that $x_1=\cdots = x_n=x$ we simply write $x^{\prec{n}}:=w^{(n)}_{\prec}(x,\ldots, x)$ and $x^{\succ{n}} := w^{(n)}_{\succ}(x,\ldots, x)$.

In the unital algebra $\overline D$ both the exponential and logarithm maps are defined in terms of the associative product~(\ref{dendassoc})
\begin{equation}
\label{ExpLog}
	\exp^\shuffle(x):=\un + \sum_{n > 0} \frac{x^{\shuffle n}}{n!}  
	\quad\ {\rm{resp.}} \quad\ 
	\log^\shuffle(\un+x):=-\sum_{n>0}(-1)^n\frac{x^{\shuffle n}}{n}. 
\end{equation}
Notice that we do not consider convergence issues: in practice we will apply such formal power series computations either in a purely algebraic setting (formal convergence arguments would then apply), or when dealing with graded algebras (then the series will reduce to a finite number of nonzero terms when restricted to a given graded component).

It is also convenient to introduce the so-called ``time-ordered'' exponential
$$
	\exp^{\prec}( x) := \un + \sum_{n > 0} x^{\prec{n}}.
$$
It corresponds to the usual time-ordered exponential in physics, when the shuffle product is defined with respect to products of, say, matrix- or operator-valued iterated integrals. See for instance \cite[Sect. 1]{bp}, where the links between products of iterated integrals and the (so-called ``shifted'') shuffle product in the Malvenuto--Reutenauer Hopf algebra are detailed. In  \cite{EPnew} a detailed study of time-ordered exponentials from an abstract algebraic point of view is presented. 

Similarly, we also define $\exp^{\succ}( x):=\sum_{n \ge 0} x^{\succ(n)}.$ Notice that $X=\exp^{\prec}( x)$ and $Z=\exp^{\succ}( x)$ are respectively the formal solutions of the two linear recursions 
$$
	X=\un + x\prec X \quad\ {\rm{resp.}} \quad\ Z= \un +x\succ Z.
$$
Both the time-ordered exponential as well as the proper exponential map (\ref{ExpLog}) will be key ingredients in our approach to cumulants. This point of view paves the way to new formal results on the structure and combinatorics of cumulants. 

Let us show, for example, how the classical group-theoretical properties of the flow map for, say, matrix-valued linear differential equations, translate almost immediately into the computation of a multiplicative inverse of the time-ordered exponential:

\begin{lem}\label{inverse-shuffle}
Let $A$ be a shuffle algebra, and $\overline A$ its augmentation by a unit $\un$. For $x \in A$ we have
$$
	\exp^{\succ}(-x) \shuffle \exp^{\prec}(x) = \un.
$$
\end{lem}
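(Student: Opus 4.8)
The plan is to expand both exponentials as formal sums indexed by the number of copies of $x$ they contain, multiply them out, and group the resulting terms by total length; it then suffices to show that the length-$n$ component vanishes for every $n\ge 1$. Concretely, writing $\exp^{\prec}(x)=\sum_{j\ge 0}x^{\prec j}$ and $\exp^{\succ}(-x)=\sum_{i\ge 0}(-1)^i x^{\succ i}$ (with $x^{\prec 0}=x^{\succ 0}=\un$, and using that $w^{\succ n}$ is $n$-linear in $w$), the product $\exp^{\succ}(-x)\shuffle\exp^{\prec}(x)$ has length-$n$ part $S_n=\sum_{i+j=n}(-1)^i\,x^{\succ i}\shuffle x^{\prec j}$. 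The claim reduces to $S_0=\un\shuffle\un=\un$ together with $S_n=0$ for $n\ge 1$. The grouping by length is legitimate under the formal/graded conventions recalled before the statement.

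First I would split each summand along the half-shuffles, $x^{\succ i}\shuffle x^{\prec j}=p_{i,j}+q_{i,j}$ with $p_{i,j}:=x^{\succ i}\prec x^{\prec j}$ and $q_{i,j}:=x^{\succ i}\succ x^{\prec j}$. The key step is a single relation coming from axiom (\ref{A2}) together with the recursive definitions of the half-shuffle words: for $j\ge 1$, using $x^{\prec j}=x\prec x^{\prec(j-1)}$ and $(a\succ b)\prec c=a\succ(b\prec c)$ read right-to-left,
\[
q_{i,j}=x^{\succ i}\succ\bigl(x\prec x^{\prec(j-1)}\bigr)=\bigl(x^{\succ i}\succ x\bigr)\prec x^{\prec(j-1)}=x^{\succ(i+1)}\prec x^{\prec(j-1)}=p_{i+1,j-1}.
\]
Thus each ``$\succ$-term'' $q_{i,j}$ equals a ``$\prec$-term'' $p_{i+1,j-1}$ one step along the same anti-diagonal $i+j=n$.

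The heart of the argument is then a sign-reversing pairing of the terms of $S_n$: since $q_{i,j}$ and $p_{i+1,j-1}$ are equal but carry opposite signs $(-1)^i$ and $(-1)^{i+1}$, they cancel pairwise for $i=0,\dots,n-1$. What survives is exactly the two extreme terms, $p_{0,n}=\un\prec x^{\prec n}$ and $(-1)^n q_{n,0}=(-1)^n\,x^{\succ n}\succ\un$, both of which vanish for $n\ge 1$ by the unit conventions $\un\prec a=0=a\succ\un$ of (\ref{unit-dend}). Hence $S_n=0$, which gives the identity.

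I expect the main obstacle to lie purely in the bookkeeping: matching indices and signs so that the pairing is exact, and treating the boundary carefully (the degenerate expressions $\un\prec\un$ and $\un\succ\un$ are undefined, so the case $i=j=0$ must be handled separately as the explicit unit $S_0=\un$, while the vanishing of the two extreme terms relies on (\ref{unit-dend})). As a sanity check motivating the mechanism, the same use of (\ref{A2}) applied to the whole product yields the fixed-point relation $\exp^{\succ}(-x)\prec\exp^{\prec}(x)=-x\succ\bigl(\exp^{\succ}(-x)\prec\exp^{\prec}(x)\bigr)$, whose only solution is $0$ in the graded setting; but the term-by-term cancellation above is cleaner and avoids having to invoke uniqueness of fixed points.
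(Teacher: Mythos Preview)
Your argument is correct and is essentially the same as the paper's: both hinge on the single identity coming from axiom~(\ref{A2}), namely $x^{\succ i}\succ x^{\prec j}=x^{\succ(i+1)}\prec x^{\prec(j-1)}$, which turns the double sum into a telescoping cancellation. The only difference is bookkeeping: the paper first drops the boundary terms $p_{0,m}$ and $q_{n,0}$ (using the unit rules) and then shows the two remaining sums are negatives of each other, whereas you keep everything, pair along anti-diagonals, and kill the two surviving boundary terms at the end.
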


\begin{proof}
Indeed, we see that
\begin{eqnarray*}
	\exp^{\succ}(-x) \shuffle \exp^{\prec}(x)- \un
	&=& \sum\limits_{n+m\geq 1}(-1)^n\big\{(x^{\succ n})\prec (x^{\prec m}) 
					+ (x^{\succ n})\succ (x^{\prec m})\big\}\\
	&=& \sum\limits_{n>0,m\geq 0}(-1)^n(x^{\succ n})\prec (x^{\prec m}) 
				+ \sum\limits_{n\geq 0,m> 0}(-1)^n(x^{\succ n})\succ (x^{\prec m}).
\end{eqnarray*}
Now, since $(-1)^n(x^{\succ n})\prec (x^{\prec m})=(-1)^n((x^{\succ n-1})\succ x)\prec (x^{\prec m})=(-1)^n(x^{\succ n-1})\succ (x^{\prec m+1})$, the proof follows.
\end{proof}

Another useful result follows from the computation of the composition inverse of the time-ordered exponential.

\begin{lem}\label{inverse}
Let $A$ be a shuffle algebra, and $\overline A$ its augmentation by a unit $\un$. For $x\in A$ and $X := \un +Y :=\exp^\prec (x)$, then
$$
	x=Y\prec \big(\sum\limits_{n\geq 0}(-1)^nY^{\shuffle n}\big).
$$
\end{lem}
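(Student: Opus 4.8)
The plan is to exploit the fixed point characterization of the time-ordered exponential together with the left half-shuffle axiom~(\ref{A1}). First I would record that, by construction, $X = \exp^\prec(x)$ is the formal solution of the linear recursion $X = \un + x\prec X$. Subtracting the unit and using $X = \un + Y$ turns this into the single identity
$$
	Y = x \prec X,
$$
where $Y = X - \un$ lies in $A$ and hence has vanishing unit component. This is the essential algebraic input, as it reinstates $x$ inside the expression we must manipulate.

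Next I would identify the second factor $S := \sum_{n\geq 0}(-1)^n Y^{\shuffle n}$ as the multiplicative inverse of $X = \un + Y$ with respect to the associative shuffle product $\shuffle$. Since $Y$ has no unit component, the geometric series telescopes: a direct check gives $S + Y\shuffle S = \un$, i.e. $X \shuffle S = \un$. Only formal manipulation is needed here, and convergence is covered by the grading/formal conventions already in force.

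With these two observations the computation is immediate by~(\ref{A1}):
$$
	Y \prec S = (x \prec X) \prec S = x \prec (X \shuffle S) = x \prec \un = x,
$$
the last equality using the unit convention $x \prec \un = x$ from~(\ref{unit-dend}). This already yields the claimed formula.

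The single point that requires care --- and which I expect to be the main (if modest) obstacle --- is that axiom~(\ref{A1}) is stated for the non-unital shuffle algebra $D$, whereas the middle and right arguments $X$ and $S$ both carry a unit component. I would therefore verify that~(\ref{A1}) persists in the augmented algebra $\overline A$ for the instance used above. By bilinearity it suffices to check the axiom when one argument equals $\un$: for $a=\un$ both sides vanish by $\un\prec(-)=0$; for $b=\un$ both sides reduce to $a\prec c$ using $a\prec\un=a$ and $\un\shuffle c=c$; and for $c=\un$ both sides reduce to $a\prec b$ using $b\shuffle\un=b$. Together with the genuine shuffle-algebra instance of~(\ref{A1}) on the $A$-components, this legitimizes the telescoping step and completes the proof.
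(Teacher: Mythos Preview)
Your proof is correct and essentially identical to the paper's: both use $Y = x\prec X$, identify $S=\sum_{n\ge0}(-1)^nY^{\shuffle n}$ as the shuffle inverse of $X$, and then apply axiom~(\ref{A1}) to collapse $(x\prec X)\prec S = x\prec(X\shuffle S)=x\prec\un=x$ --- you simply run the chain of equalities in the opposite direction. Your additional verification that~(\ref{A1}) persists when some arguments equal $\un$ is a welcome point of rigor that the paper leaves implicit.
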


\begin{proof}
We follow \cite{foipat}. From $X=\un + \sum_{n > 0}x^{\prec n}$, we get $X-\un=Y=x \prec X$. On the other hand, the (formal) inverse of $X$ {\it{for the shuffle product}} is given by $X^{-1}=\frac{1}{1+Y}=\sum_{k\geq 0}(-1)^kY^{\shuffle k}$. We finally obtain 
$$
	x = x\prec \un 
	  = x\prec (X\shuffle X^{ -1})
	  = (x\prec X)\prec X^{-1}
	  =Y\prec \big(\sum\limits_{n\geq 0}(-1)^nY^{\shuffle n}\big).
$$
\end{proof}

There is an abundance of literature on non-commutative shuffles and on associated identities. The interested reader is referred to, e.g., \cite{chapotonMM,dendeq,newdend} for further insights and examples.


\section{Unshuffling the double bar construction}
\label{sect:HopfAlg}

The notion dual to the one of shuffle product, i.e., the unshuffle coproduct has been considered only recently from an abstract axiomatic point of view. It plays a key role in the seminal works of L.~Foissy, and especially in his proof of the Duchamp--Hivert--Thibon ``free Lie algebra''  conjecture. We refer to his work for further details \cite{Foissy}.

\begin{defn}
A counital unshuffle coalgebra (or counital  codendrimorphic coalgebra) is a coaugmented coalgebra $\overline C = C \oplus k.\un$ with coproduct
\begin{equation}
\label{codend}
	\Delta(c) := \bar\Delta(c) + c \otimes \un + \un \otimes c,
\end{equation}
such that on $C$, $\bar\Delta = \Delta_{\prec} + \Delta_{\succ}$ with 
\begin{eqnarray}
	(\Delta_{\prec} \otimes I) \circ \Delta_{\prec}   &=& (I \otimes \bar\Delta)\circ \Delta_{\prec}        	\label{C1}\\
  	(\Delta_{\succ} \otimes I) \circ \Delta_{\prec}   &=& (I \otimes \Delta_{\prec})\circ \Delta_{\succ} 	\label{C2}\\
   	(\bar\Delta \otimes  I) \circ \Delta_{\succ}         &=& (I \otimes \Delta_{\succ})\circ \Delta_{\succ}   \label{C3}.
\end{eqnarray}
\end{defn}

\noindent The maps $\Delta_{\prec}$ and $\Delta_{\succ}$ are called respectively left and right half-unshuffles.

We shall omit the definition of an unshuffle (or codendrimorphic) coalgebra. The latter is obtained by removing the unit, that is, $\bar\Delta$ is acting on $C$, and has a splitting into two half-coproducts, $\Delta_{\prec}$ and $\Delta_{\succ}$, which obey relations (\ref{C1}), (\ref{C2}) and (\ref{C3}).

\begin{defn}
An unshuffle (or codendrimorphic) bialgebra is a unital and counital bialgebra $\overline B=B \oplus k.\un$ with product $\cdot$ and coproduct $\Delta$. At the same time $\overline B$ is a counital unshuffle coalgebra with $\bar\Delta = \Delta_{\prec} + \Delta_{\succ}$. The following compatibility relations hold 
\begin{eqnarray}
	\Delta^+_{\prec}(a \cdot b)  &=& \Delta^+_{\prec}(a)  \cdot \Delta(b)      	\label{D1}\\
  	\Delta^+_{\succ}(a \cdot b)  &=& \Delta^+_{\succ}(a)  \cdot \Delta(b),    	\label{D2}
\end{eqnarray}
where
\begin{eqnarray}
	\Delta^+_{\prec}(a)  &:=& \Delta_{\prec}(a) + a \otimes \un     	\label{D3}\\
  	\Delta^+_{\succ}(a)  &:=& \Delta_{\succ}(a) + \un \otimes a.     	\label{D4}
\end{eqnarray}
\end{defn}

We introduce now the algebraic structures encoding the relation between cumulants and moments in free probability, as viewed from the point of view of (un)shuffle (co)products.

\smallskip

Let $A$ be an associative $k$-algebra. Define $T(A):=\oplus_{n > 0} A^{\otimes n}$ to be the nonunital tensor algebra over $A$. The full tensor algebra is denoted $\overline T(A):=\oplus_{n \ge 0} A^{\otimes n}$. Elements in $T(A)$ are written as words $a_1\cdots a_n \in T(A)$ (to avoid ambiguities we denote $a \cdots a\in A^{\otimes n}$ by $a^{\otimes n}$, and the product of the $a_i$s in $A$ is written $a_1 \cdot_A a_2$). The space $T(A)$, equipped with the concatenation product of words (for $w=a_1\cdots a_n$ and $w'=b_1\cdots b_m$, $w\cdot w':=a_1\cdots a_nb_1\cdots b_m$), is a graded algebra. The natural degree of a word $w=a_1\cdots a_n$ is $n$, and we write $w \in T_n(A)$.

We also set $T(T(A)):=\oplus_{n > 0} T(A)^{\otimes n}$, and use the bar-notation to denote elements $w_1 | \cdots | w_n \in T(T(A))$, $w_i \in T(A)$, $i=1,\ldots,n$. The algebra $T(T(A))$ is equipped with the concatenation product. For $a= w_1 | \cdots | w_n$ and $b=  w_1' | \cdots | w_m'$ we denote their concatenation product in $T(T(A))$ by $a|b$, that is, $a|b := w_1 | \cdots | w_n | w_1' | \cdots | w_m'$. This algebra is multigraded, $T(T(A))_{n_1,\ldots ,n_k}:=T_{n_1}(A)\otimes \cdots \otimes T_{n_k}(A)$, as well as graded, $T(T(A))_n:=\bigoplus\limits_{n_1+ \cdots +n_k=n}T(T(A))_{n_1,\ldots ,n_k}$. Similar observations hold for the unital case, $\overline T(T(A))=\oplus_{n \ge 0} T(A)^{\otimes n}$, and we will identify without further comments a bar symbol such as $w_1|1|w_2$ with $w_1|w_2$ (formally, using the canonical map from $\overline T(\overline T(A))$ to $\overline T(T(A))$).

When $A$ is commutative (or graded commutative in the sense of algebraic topology), then $T(T(A))$ is classically involved in the definition of the double bar construction on $A$. This is a differential graded algebra structure appearing in homological algebra as well as in the study of $K(\Pi,n)$ spaces -- the latter can be seen as the very motivation underlying the Eilenberg--MacLane study of shuffle products in \cite{EM}. See, e.g., \cite{FPHH} for a modern account. The terminology ``double bar'' refers to the fact, that one may represent tensors $a_1\otimes \cdots \otimes a_n$ using bars, $a_1| \cdots |a_n$, instead of using the word notation. The representation of elements in $T(T(A))$ would then involve double bars. We point out that the combinatorial operations we are going to define and study on $T(T(A))$ are {\it{different}} from the classical structures existing on the double bar construction, even for a commutative algebra $A$.

\medskip

Given two (canonically ordered) subsets $S \subseteq U$ of the set of integers $\bf N$, we call connected component of $S$ relative to $U$ a maximal sequence $s_1, \ldots , s_n$ in $S$ such that there are no $ 1\leq i < n$ and $u \in U$, such that $s_i < u < s_{i+1}$. In particular, a connected component of $S$ in $\bf N$ is simply a maximal sequence of successive elements $s,s+1,\ldots ,s+n$ in $S$.

Consider a word $a_1\cdots a_n \in T(A)$. For $S:=\{s_1,\ldots, s_p\} \subseteq [n]$, we set $a_S:= a_{s_1} \cdots a_{s_p}$ (resp. $a_\emptyset:=1$). Denoting $J_1,\ldots,J_k $ the connected components of $[n] - S$, we also set $a_{J^S_{[n]}}:= a_{J_1} | \cdots | a_{J_k}$. More generally, for $S \subseteq U \subseteq [n]$, set  $a_{J^S_U}:= a_{J_1} | \cdots | a_{J_k}$, where the $a_{J_j}$ are now the connected components of $U-S$ in $U$.

\begin{defn} \label{def:coproduct}
The map $\Delta : T(A) \to \overline T(A) \otimes  \overline T(T(A))$ is defined by
\begin{equation}
\label{HA}
	\Delta(a_1\cdots a_n) := \sum_{S \subseteq [n]} a_S \otimes  a_{J_1} | \cdots | a_{J_k}
					   =\sum_{S \subseteq [n]} a_S \otimes a_{J^S_{[n]}}.
\end{equation} 
The coproduct is then extended multiplicatively to all of $\overline T(T(A))$
$$
	\Delta(w_1 | \cdots | w_m) := \Delta(w_1) \cdots \Delta(w_m),
$$
with $\Delta(\un):= \un \otimes \un$.
\end{defn}

\begin{thm} \label{thm:HA}
The graded algebra $\overline T(T(A))$ equipped with the coproduct \eqref{HA} is a connected graded non-commutative and non-cocommutative Hopf algebra. 
\end{thm}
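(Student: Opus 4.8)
The plan is to check, in order, that $\overline T(T(A))$ is graded and connected, that $\Delta$ together with the counit $\epsilon$ (defined by $\epsilon(\un)=1$ and $\epsilon=0$ on $T(T(A))$) makes it a bialgebra, and then to invoke the standard fact that a connected graded bialgebra automatically carries an antipode. The algebraic side is essentially free of content: $\overline T(T(A))$ is the tensor (free associative) algebra on the vector space $T(A)$, so the concatenation product is associative and visibly non-commutative (already $a|aa\neq aa|a$ for any nonzero $a\in A$), and any linear map out of the generating space $T(A)$ extends uniquely to an algebra morphism. In particular $\Delta$ and $\epsilon$, prescribed on single blocks and extended multiplicatively, are well-defined algebra morphisms into $\overline T(T(A))^{\otimes 2}$ and $k$; this is precisely the bialgebra compatibility, so nothing extra is needed there. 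Both maps are graded since $\deg a_S+\deg a_{J^S_{[n]}}=|S|+(n-|S|)=n$, and $\overline T(T(A))_0=k.\un$ gives connectedness. The counit axiom is immediate on a generator $a_1\cdots a_n$: applied to $\Delta(a_1\cdots a_n)=\sum_{S\subseteq[n]}a_S\otimes a_{J^S_{[n]}}$, the map $\epsilon\otimes I$ kills every term except $S=\emptyset$, where $a_\emptyset=\un$ and $a_{J^\emptyset_{[n]}}=a_1\cdots a_n$, and symmetrically $I\otimes\epsilon$ retains only $S=[n]$.

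The real work is coassociativity, and since $(\Delta\otimes I)\Delta$ and $(I\otimes\Delta)\Delta$ are both algebra morphisms it suffices to compare them on a single block $a_1\cdots a_n$. Expanding the first, one gets a sum over chains $R\subseteq S\subseteq[n]$ of terms $a_R\otimes(\text{connected components of }S\setminus R\text{ in }S)\otimes(\text{connected components of }[n]\setminus S\text{ in }[n])$. Expanding the second, one applies $\Delta$ multiplicatively to the bar-word $a_{J^S_{[n]}}=a_{J_1}|\cdots|a_{J_k}$, obtaining a sum indexed by $S$ together with a choice $T\subseteq[n]\setminus S$ (with $T_i:=T\cap J_i$), of the form $a_S\otimes(a_{T_1}|\cdots|a_{T_k})\otimes\prod_i(\text{connected components of }J_i\setminus T_i\text{ in }J_i)$.

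I would then exhibit the matching: identify first legs by $R\leftrightarrow S$ and reindex the summations through $S\mapsto T:=S\setminus R$, so that $S=R\sqcup T$ and $\{R\subseteq S\subseteq[n]\}$ corresponds bijectively to $\{(R,T):T\subseteq[n]\setminus R\}$. One then checks the remaining two legs agree termwise: the right legs both collapse to the bar-word of maximal runs of consecutive integers in $[n]\setminus S=[n]\setminus(R\sqcup T)$, and the middle legs both collapse to the grouping of $T$ in which two elements share a block exactly when no element of $R$ lies strictly between them. The step I expect to be the main obstacle is precisely this bookkeeping of ``connected components relative to $U$'' as the ambient set $U$ varies between $[n]$, $S$, and the individual runs $J_i$: one must verify that grouping $T$ ``by the runs $J_i$ of $[n]\setminus R$'' coincides with ``connected components of $T$ in $S=R\sqcup T$'', and that the runs computed run-by-run inside each $J_i$ reassemble, in the correct order and without spurious merging across the $R$-gaps, into the global runs of $[n]\setminus S$. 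Both reductions rest on the single observation that two consecutive elements of a subset are separated, relative to a larger ambient set, exactly when an element of the removed set lies between them.

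Finally, non-cocommutativity is visible already on $a_1a_2a_3$: the term $a_2\otimes(a_1|a_3)$ (from $S=\{2\}$) has no mirror image, because every left leg produced by $\Delta$ on a single block is again a single block, so $(a_1|a_3)\otimes a_2$ cannot occur; hence $\tau\circ\Delta\neq\Delta$. Connectedness of the graded bialgebra then yields the antipode by the usual degree-lowering recursion, completing the proof that $\overline T(T(A))$ is a connected graded non-commutative and non-cocommutative Hopf algebra.
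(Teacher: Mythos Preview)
Your proposal is correct and follows essentially the same approach as the paper: reduce coassociativity to single blocks by multiplicativity, then verify it via the chain-of-subsets expansion $\sum_{S\subseteq U\subseteq[n]} a_S\otimes a_{J^S_U}\otimes a_{J^U_{[n]}}$. The paper's own proof is a one-line version of your coassociativity argument and leaves the counit, antipode (via graded connectedness), non-commutativity and non-cocommutativity implicit, whereas you spell these out; the combinatorial bookkeeping you flag as the ``main obstacle'' is exactly what the paper's compact notation $a_{J^S_U}$ is designed to suppress.
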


\begin{proof}
By construction, $\overline T(T(A))$ is a graded algebra, and the map (\ref{HA}) respects the graduation and is both multiplicative and counital. It remains to show that $\Delta$ is coassociative. Note that the multiplicativity of $\Delta$ implies that it is enough to check the property on elements of $T(A)$.

We get:
\begin{eqnarray*}
	(\Delta \otimes I) \circ \Delta (a_1\cdots a_n) &=& (\Delta \otimes I)(\sum\limits_{U \subseteq [n]} a_U \otimes a_{J^U_{[n]}})\\
									   &=& \sum\limits_{S \subseteq U \subseteq [n]} a_S \otimes a_{J^S_U} \otimes a_{J^U_{[n]}} = (I\otimes \Delta)\circ\Delta (a_1\cdots a_n).
\end{eqnarray*}
\end{proof}

The crucial observation is that coproduct \eqref{HA} can be split into two parts as follows. On $T(A)$ define the {\it{left half-coproduct}} by
\begin{equation}
\label{HAprec+}
	\Delta^+_{\prec}(a_1 \cdots a_n) := \sum_{1 \in S \subseteq [n]} a_S \otimes a_{J^S_{[n]}},
\end{equation}
and
\begin{equation}
\label{HAprec}
	\Delta_{\prec}(a_1 \cdots a_n) := \Delta^+_{\prec}(a_1 \cdots a_n) - a_1 \cdots a_n \otimes \un. 
\end{equation}
The {\it{right half-coproduct}} is defined by
\begin{equation}
\label{HAsucc+}
	\Delta^+_{\succ}(a_1 \cdots a_n) := \sum_{1 \notin S \subset [n]} a_S \otimes a_{J^S_{[n]}}
\end{equation}
and
\begin{equation}
\label{HAsucc}
	\Delta_{\succ}(a_1 \cdots a_n) := \Delta^+_{\succ}(a_1 \cdots a_n) -  \un \otimes a_1 \cdots a_n.
\end{equation}
Which yields $\Delta = \Delta^+_{\prec} + \Delta^+_{\succ}$, and 
$$
	\Delta(w) = \Delta_{\prec}(w) + \Delta_{\succ}(w) + w \otimes \un + \un \otimes w.
$$
This is extended to $T(T(A))$ by defining
\begin{eqnarray*}
	\Delta^+_{\prec}(w_1 | \cdots | w_m) &:=& \Delta^+_{\prec}(w_1)\Delta(w_2) \cdots \Delta(w_m) \\
	\Delta^+_{\succ}(w_1 | \cdots | w_m) &:=& \Delta^+_{\succ}(w_1)\Delta(w_2) \cdots \Delta(w_m). 
\end{eqnarray*}

\begin{thm} \label{thm:bialg}
The bialgebra $\overline T(T(A))$ equipped with $\Delta_{\succ}$ and $\Delta_{\prec}$ is an unshuffle bialgebra. 
\end{thm}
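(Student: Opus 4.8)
The plan is to verify that $\overline T(T(A))$, equipped with the half-coproducts $\Delta_\prec$ and $\Delta_\succ$ defined in \eqref{HAprec} and \eqref{HAsucc}, satisfies all the axioms of an unshuffle bialgebra (Definition following \eqref{C1}--\eqref{C3} and \eqref{D1}--\eqref{D4}). Since Theorem \ref{thm:HA} already establishes that $\overline T(T(A))$ is a connected graded Hopf algebra with $\Delta = \Delta_\prec^+ + \Delta_\succ^+$, three families of checks remain: first, the codendrimorphic coassociativity relations \eqref{C1}, \eqref{C2}, \eqref{C3}; second, the compatibility relations \eqref{D1}, \eqref{D2} between the half-coproducts and the concatenation product; and third, the bookkeeping that reconciles the ``$+$''-decorated maps $\Delta_\prec^+, \Delta_\succ^+$ (which are the combinatorially natural objects here) with the unit-corrected maps $\Delta_\prec, \Delta_\succ$ appearing in the axioms via \eqref{D3}, \eqref{D4}.

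First I would dispatch the compatibility relations \eqref{D1}--\eqref{D2}, because they are essentially built into the construction: the extension of $\Delta^+_\prec$ and $\Delta^+_\succ$ to $\overline T(T(A))$ was defined by $\Delta^+_\prec(w_1|\cdots|w_m) := \Delta^+_\prec(w_1)\Delta(w_2)\cdots\Delta(w_m)$, and likewise for $\succ$. Hence for $a = w_1|\cdots|w_n$ and $b = w_1'|\cdots|w_m'$ one has $\Delta^+_\prec(a\cdot b) = \Delta^+_\prec(w_1)\Delta(w_2)\cdots\Delta(w_n)\Delta(w_1')\cdots\Delta(w_m') = \Delta^+_\prec(a)\cdot\Delta(b)$, using that $\Delta$ is itself multiplicative (Theorem \ref{thm:HA}); the same argument gives \eqref{D2}. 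The point is simply that the half-coproducts treat only the \emph{first} bar-block specially and act by the full $\Delta$ on everything to its right, so concatenating on the right commutes with the operation.

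The heart of the proof is the coassociativity system \eqref{C1}--\eqref{C3}, and by multiplicativity it suffices to verify these on a single word $a_1\cdots a_n \in T(A)$, exactly as in the proof of Theorem \ref{thm:HA}. The combinatorial content is the tracking of whether the index $1$ belongs to the selected subset at each stage. Concretely, applying $(\Delta_\prec\otimes I)\circ\Delta_\prec$ produces sums over chains $S\subseteq U\subseteq[n]$ with the constraint $1\in S$ (from the outer $\Delta_\prec$) and the constraint that $1$ lies in the selected subset relative to $U$ at the inner stage, whereas $(I\otimes\bar\Delta)\circ\Delta_\prec$ imposes $1\in U$ and then splits the complementary blocks with no constraint on $1$. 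I would unwind the defining sums \eqref{HAprec+}--\eqref{HAsucc} and match terms by checking, for each nested pair $S\subseteq U$, that the condition ``$1\in S$'' on the left matches ``$1\in U$ together with $1$ in the inner subset'' on the right; relations \eqref{C2} and \eqref{C3} are the analogous statements where ``$1\in$'' is replaced by ``$1\notin$'' in the appropriate slots, reflecting the $\succ/\prec$ distinction. The already-proved coassociativity of the full $\bar\Delta$ guarantees that all these terms \emph{together} agree; the refinement is purely the observation that the membership of $1$ partitions both sides compatibly.

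I expect the main obstacle to be precisely this membership bookkeeping, in particular the correct handling of the $\un$-correction terms. The maps $\Delta_\prec, \Delta_\succ$ differ from $\Delta^+_\prec, \Delta^+_\succ$ by the boundary terms $a\otimes\un$ and $\un\otimes a$ in \eqref{HAprec}, \eqref{HAsucc}, and these correspond exactly to the extremal subsets $S=[n]$ (for $\prec$) and $S=\emptyset$ (for $\succ$). When composing two half-coproducts, one must confirm that the $\un$-decorated terms produced on each side of \eqref{C1}--\eqref{C3} cancel or correspond correctly, so that the reduced coproducts $\bar\Delta$ appearing on the right-hand sides (rather than $\Delta$) are the right objects. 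The safest route is to carry the computation at the level of $\Delta^+_\prec, \Delta^+_\succ$ over all subsets including the extremal ones, verify the unconstrained identities there, and only at the end subtract the boundary terms using \eqref{HAprec}, \eqref{HAsucc} and the definitions \eqref{D3}, \eqref{D4}; this isolates the one genuinely delicate point while keeping the main subset-matching argument clean.
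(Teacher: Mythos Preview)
Your proposal is correct and follows essentially the same idea as the paper: the three coassociativity relations \eqref{C1}--\eqref{C3} are obtained from the already-established coassociativity of $\Delta$ by partitioning the sum over chains of subsets according to whether the index $1$ lies in the innermost, middle, or outermost selected set, and the compatibility relations \eqref{D1}--\eqref{D2} are immediate from the definition of the extension to $T(T(A))$.

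One organizational point is worth flagging. You write that ``by multiplicativity it suffices to verify these on a single word $a_1\cdots a_n\in T(A)$, exactly as in the proof of Theorem~\ref{thm:HA}.'' This is not quite the same reduction: $\Delta$ is an algebra morphism, but $\Delta^+_\prec$ and $\Delta^+_\succ$ are not---they satisfy instead the twisted relations $\Delta^+_\prec(a|b)=\Delta^+_\prec(a)\Delta(b)$. The reduction to $T(A)$ still works, but one has to check (easy, using \eqref{D1}--\eqref{D2}) that, e.g., $(\Delta^+_\prec\otimes I)\circ\Delta^+_\prec(a|b)=\big[(\Delta^+_\prec\otimes I)\circ\Delta^+_\prec(a)\big]\cdot\big[(\Delta\otimes I)\circ\Delta(b)\big]$ and similarly for the other compositions, so that the identities on $T(A)$ together with coassociativity of $\Delta$ propagate. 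The paper avoids this small extra step by working directly on a general bar-word $w_1|\cdots|w_k$ via the explicit chain formula \eqref{eqtechn} and then reading off the three restrictions; your route via reduction to $T(A)$ plus the $\Delta^+$-calculation and final unit correction is equally valid.
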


\begin{proof}
From $\Delta(a| b)=\Delta(a) \Delta(b)$, we get 
$$
	\Delta_\prec^+(a | b)	=\Delta_\prec^+(a)\Delta(b),\ \Delta_\succ^+(a| b)
					=\Delta_\succ^+(a) \Delta(b).
$$
We know that the coproduct $\Delta$ is coassociative. For an element $w_1| \cdots |w_k \in T(T(A))$, let us write $w_i=a_{s_1^i}\cdots a_{s_{n_i}^i}$ with $S_i=\{s_1^i,...,s_{n_i}^i\}.$ We get:
\begin{eqnarray}
	(\Delta\otimes I)\circ\Delta (w_1|\cdots |w_k)
	&=& (I\otimes \Delta)\circ\Delta (w_1|\cdots |w_k) \nonumber\\
 	&=& \sum\limits_{X_i\subseteq T_i\subseteq S_i}(a_{X_1}|\cdots |a_{X_k})\otimes (a_{J^{X_1}_{T_1}}|\cdots 
	|a_{J^{X_k}_{T_k}})\otimes (a_{J^{T_1}_{S_1}}|\cdots |a_{J^{T_k}_{S_k}}).		\label{eqtechn}
\end{eqnarray}
Applying $\Delta_\prec$ instead of $\Delta$ to $w_1|\cdots |w_k$ amounts to limiting the range of variation of the $T_i$ in 
$$
	\Delta (w_1|\cdots |w_k)=\sum\limits_{T_i\subseteq S_i}(a_{{T_1}}|\cdots |a_{{T_k}})\otimes 
	(a_{J^{T_1}_{S_1}}|\cdots |a_{J^{T_k}_{S_k}})
$$
by requiring $1\in T_1$ and $\coprod T_i\not=\coprod S_i$. Similarly for higher order compositions of half-coproducts.

Eventually, we get 
\begin{itemize}
 \item $(\Delta_\prec\otimes I)\circ\Delta_\prec (w_1|\cdots |w_k)$ and $(I\otimes \bar\Delta)\circ\Delta_\prec (w_1|\cdots |w_k)$ are equal, and both are obtained by restricting the domain of the summation operator in (\ref{eqtechn}) to the $X_i,T_i,S_i$ such that $1\in X_1$, $\coprod X_i\not=\coprod T_i\not=\coprod S_i$.
 
 \item $(\Delta_\succ\otimes I)\circ\Delta_\prec (w_1|\cdots |w_k)$ and $(I\otimes \Delta_\prec)\circ\Delta_\succ (w_1|\cdots |w_k)$ are equal, and both are obtained by restricting the domain of the summation operator in (\ref{eqtechn}) to the $X_i,T_i,S_i$ such that $1 \in T_1, 1\notin X_i$, $\coprod X_i\not=\coprod T_i\not=\coprod S_i$.

\item $(\bar\Delta\otimes I)\circ\Delta_\succ (w_1|\cdots |w_k)$ and $(I\otimes \Delta_\succ)\circ\Delta_\succ (w_1|\cdots |w_k)$ are equal and both obtained by restricting the domain of the summation operator in (\ref{eqtechn}) to the $X_i,T_i,S_i$ such that $1\in S_1, 1\notin T_1$, $\coprod X_i\not=\coprod T_i\not=\coprod S_i$.
\end{itemize}
\end{proof}


\section{Convolution and characters}
\label{sect:ConvChar}

Recall that the ultimate purpose of free probability theory is the study of linear forms on $T(A)$. However, this is equivalent to the study of linear forms that are multiplicative maps $\Phi$ on $T(T(A))$. This observation motivates the present section; the link with cumulant-moments relations in free probability will be made precise in the next section.

The following proposition is the natural generalization to unshuffle bialgebras of the fact that the convolution product equips the space of linear endomorphisms of a classical Hopf algebra with an associative algebra structure \cite{Cartier2}. We refer to \cite{foipat} for an application of these ideas to the study of the structure of commutative shuffle bialgebras.

Indeed, recall that the set of linear maps, $Lin(T(T(A)),k)$, is a $k$-algebra with respect to the convolution product defined in terms of the coproduct (\ref{HA}), i.e., for $f,g \in Lin(T(T(A)),k)$
$$
	f * g := m_k \circ (f \otimes g) \circ \Delta,
$$ 
where $m_k$ stands for the product map in $k$. We define accordingly the left and right half-convolution products:
$$
	f \prec g := m_k \circ (f\otimes g)\circ \Delta_\prec ,
$$
$$
	f \succ g := m_k \circ (f\otimes g)\circ \Delta_\succ .
$$

\begin{prop}
The space $({\mathcal L}_A:=Lin(T(T(A)),k), \prec, \succ)$ is a shuffle algebra.
\end{prop}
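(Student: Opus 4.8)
The plan is to verify the three shuffle axioms \eqref{A1}--\eqref{A3} for the pair $(\prec,\succ)$ on $\mathcal{L}_A$ by dualizing the three coalgebra axioms \eqref{C1}--\eqref{C3}, which hold on $\overline{T}(T(A))$ precisely because it is a counital unshuffle coalgebra by Theorem \ref{thm:bialg}. The essential observation is that the half-convolution products are built from the half-coproducts $\Delta_{\prec},\Delta_{\succ}$ exactly so that nesting convolutions corresponds to iterating half-coproducts, while the scalar product $m_k$ supplies the associativity needed to collapse two multiplications into one. Since everything takes place on $T(T(A))$ through the reduced half-coproducts and $f,g,h$ are linear forms, the argument is purely formal.

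Concretely, first I would record that for $f,g,h \in \mathcal{L}_A$ the associativity of $m_k$ lets me rewrite any nested half-convolution in the form $m_k^{(2)} \circ (f \otimes g \otimes h) \circ P$, where $m_k^{(2)} : k^{\otimes 3} \to k$ is the triple scalar product and $P$ is a composite of two half-coproducts. For instance,
$$(f \prec g) \prec h = m_k^{(2)} \circ (f \otimes g \otimes h) \circ (\Delta_{\prec} \otimes I) \circ \Delta_{\prec},$$
while, using $g \shuffle h = m_k \circ (g \otimes h) \circ \bar\Delta$,
$$f \prec (g \shuffle h) = m_k^{(2)} \circ (f \otimes g \otimes h) \circ (I \otimes \bar\Delta) \circ \Delta_{\prec}.$$
These two expressions agree by coalgebra axiom \eqref{C1}, which states exactly $(\Delta_{\prec} \otimes I)\circ \Delta_{\prec} = (I \otimes \bar\Delta) \circ \Delta_{\prec}$; this establishes \eqref{A1}.

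Next I would run the same computation for the two remaining axioms. Expanding $(f \succ g) \prec h$ and $f \succ (g \prec h)$ yields $m_k^{(2)} \circ (f \otimes g \otimes h)$ composed with $(\Delta_{\succ} \otimes I)\circ \Delta_{\prec}$ and with $(I \otimes \Delta_{\prec})\circ \Delta_{\succ}$ respectively, so \eqref{A2} is precisely \eqref{C2}; and expanding $f \succ (g \succ h)$ and $(f \shuffle g) \succ h$ produces $(I \otimes \Delta_{\succ})\circ \Delta_{\succ}$ and $(\bar\Delta \otimes I)\circ \Delta_{\succ}$, so \eqref{A3} is precisely \eqref{C3}. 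In each case the translation is mechanical once the nesting is rewritten with $m_k^{(2)}$.

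The step requiring the most care is the bookkeeping in the collapse to $m_k^{(2)}$: one must check that the commutativity and associativity of the scalar product $m_k$ justify pulling the inner $m_k$ past the tensor leg not involved in it, and, crucially, that the reduced coproduct $\bar\Delta = \Delta_{\prec} + \Delta_{\succ}$ surfaces exactly where the shuffle product $\shuffle$ appears, rather than the full $\Delta$, which would additionally contribute the unital terms $w \otimes \un + \un \otimes w$. Once this correspondence is pinned down, no genuine combinatorial or analytic obstacle remains, and the content of the proposition is entirely the duality with \eqref{C1}--\eqref{C3}.
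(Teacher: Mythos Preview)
Your proposal is correct and follows essentially the same route as the paper: both rewrite each nested half-convolution as $m_k^{[3]}\circ(f\otimes g\otimes h)$ composed with the appropriate iterate of half-coproducts, and then invoke the unshuffle coalgebra identities \eqref{C1}--\eqref{C3} to conclude. Your additional remark that it is the reduced coproduct $\bar\Delta$ (not the full $\Delta$) that must appear opposite $\shuffle$ is a worthwhile clarification, but the argument is otherwise identical.
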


We recall its proof: for arbitrary $f,g,h\in {\mathcal L}_A$,
$$
	(f\prec g)\prec h	=m_k\circ ((f\prec g)\otimes h)\circ\Delta_\prec 
				=m_k^{[3]}\circ (f\otimes g\otimes h)\circ(\Delta_\prec\otimes I)\circ \Delta_\prec,
$$
where $m_k^{[3]}$ stands for the product map from $k^{\otimes 3}$ to $k$. Similarly 
\begin{eqnarray*}
	f\prec (g\shuffle h)&=&m_k\circ (f\otimes (g\shuffle h))\circ \Delta_\prec \\
				&=&m_k^{[3]}\circ (f\otimes g\otimes h)\circ (I\otimes \overline\Delta)\circ\Delta_\prec,
\end{eqnarray*}
so that the identity $(f\prec g)\prec h=f\prec (g\shuffle h)$ follows from $(\Delta_\prec\otimes I)\otimes\Delta_\prec =(I\otimes \overline\Delta)\circ\Delta_\prec$, and similarly for the other identities characterizing shuffle algebras.

As usual, we equip the shuffle algebra ${\mathcal L}_A$ with a unit, that is, in $\overline{{\mathcal L}_A}:={\mathcal L}_A\oplus k.\un \cong Lin(\overline T(T(A)),k)$, where in the last isomorphism the unit $\un \in \overline{{\mathcal L}_A}$ is identified with the augmentation map $e \in Lin(\overline T(T(A)),k)$ -- the null map on $T(T(A))$ and the identity map on $T(A)^{\otimes 0}\cong k$. That is, for an arbitrary $f$ in ${\mathcal L}_A$,
$$
	f\prec e=f=e\succ f,\ \ e\prec f=0=f\succ e.
$$

Let now $\phi$ be a linear form on $T(A)$. It extends uniquely to a multiplicative linear form $\Phi$ on $T(T(A))$ by setting
$$
	\Phi(w_1| \cdots |w_n):=\phi(w_1) \cdots \phi(w_n),
$$
(or to a unital and multiplicative linear form on $\overline T(T(A))$). Conversely any such multiplicative map $\Phi$ gives rise to a linear form on $T(A)$ by restriction of its domain.

This motivates the following definition, which generalizes to unshuffle bialgebras the classical link between characters and infinitesimal characters in the theory of classical Hopf algebras. For the latter, we refer to \cite{EGP}, where the equivalence between the two families of characters is studied in detail.

\begin{defn}
A linear form $\Phi \in \overline{{\mathcal L}_A}$ is called a character if it is unital, $\Phi(\un)=1$, and multiplicative, i.e., for all $a,b \in \overline T(T(A))$
$$
	\Phi(a|b)=\Phi(a)\Phi(b).
$$
A linear form $\kappa\in \overline{{\mathcal L}_A}$ is called infinitesimal character, if $\kappa(\un)=0$, and if for all $a,b\in T(T(A))$ 
$$
	\kappa(a|b)=0.
$$
\end{defn}

We write $Ch(\kappa)$ for the obvious extension of  a linear form on $T(A)$ (e.g.~the restriction to $T(A)$ of an infinitesimal character) to a character, defined by $Ch(\kappa)(\un):=1$, $Ch(\kappa)(w_1| \cdots|w_k):=\kappa(w_1) \cdots \kappa(w_k)$. Conversely, for an arbitrary $F\in \overline{{\mathcal L}_A}$, let us write $Res(F)$ for the infinitesimal character, which is defined as the restriction of $F$ to $T(A)$, and the null map on other tensor powers of $T(A)$ in $\overline T(T(A))$.

\begin{thm}\label{thm:Gg}
There exists another natural bijection $B$ between $G(A)$, the set of characters, and $g(A)$, the set of infinitesimal characters on $\overline T(T(A))$. More precisely, for $\Phi \in G(A), \exists ! \kappa  \in g(A)$ such that 
$$
	\Phi=e+\kappa\prec \Phi = \exp^{\prec}( \kappa),
$$
and conversely, for $\kappa\in g(A)$
$$
	\Phi:=\exp^{\prec}( \kappa)
$$
is a character.
\end{thm}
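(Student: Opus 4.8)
The plan is to present $\exp^{\prec}$ and the inversion formula of Lemma \ref{inverse} as mutually inverse maps, and to reduce the whole statement to the single equivalence: for $\kappa$ with $\kappa(\un)=0$, the element $\Phi:=\exp^{\prec}(\kappa)$ lies in $G(A)$ if and only if $\kappa$ lies in $g(A)$. Granting this, the bijection $B$ is immediate. Indeed, for $\kappa\in g(A)$ the ``if'' part gives $\exp^{\prec}(\kappa)\in G(A)$; conversely, given $\Phi\in G(A)$, write $\Phi=\un+Y$ with $Y:=\Phi-e\in\Lc_A$, so that Lemma \ref{inverse} produces the \emph{unique} $\kappa$ with $\exp^{\prec}(\kappa)=\Phi$, namely $\kappa=Y\prec\big(\sum_{n\geq 0}(-1)^nY^{\shuffle n}\big)$, and the ``only if'' part forces $\kappa\in g(A)$. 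The uniqueness of $\kappa$ is exactly the injectivity of $\exp^{\prec}$ built into Lemma \ref{inverse}, while the chain of identities $\Phi=e+\kappa\prec\Phi=\exp^{\prec}(\kappa)$ is just the defining recursion of the time-ordered exponential recalled before Lemma \ref{inverse-shuffle}.

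Before the induction I would record the operational form of the left half-convolution in the augmented algebra. Since $\kappa(\un)=0$ and $\Phi(\un)=1$, the Sch\"utzenberger conventions $\kappa\prec e=\kappa$, $e\prec\Phi=0$ amount to the statement that, on any nontrivial word $w$, one has $(\kappa\prec\Phi)(w)=m_k\circ(\kappa\otimes\Phi)\circ\Delta^+_{\prec}(w)$ with the \emph{plus} coproduct $\Delta^+_{\prec}(w)=\Delta_{\prec}(w)+w\otimes\un$. (A check on a single letter $a_1$, where $\Delta_{\prec}(a_1)=0$ but $\Delta^+_{\prec}(a_1)=a_1\otimes\un$, correctly returns $\Phi(a_1)=\kappa(a_1)$.)

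The heart of the matter is then a single induction on the total degree $n$ of a product $a|b$ of two nontrivial elements. Combining the recursion $\Phi=e+\kappa\prec\Phi$ with the bialgebra compatibility \eqref{D1}, $\Delta^+_{\prec}(a|b)=\Delta^+_{\prec}(a)\cdot\Delta(b)$, I would expand
\[
	\Phi(a|b)=(\kappa\prec\Phi)(a|b)=m_k\circ(\kappa\otimes\Phi)\big(\Delta^+_{\prec}(a)\cdot\Delta(b)\big)=\sum_{i,j}\kappa(p_i|r_j)\,\Phi(q_i|s_j),
\]
writing $\Delta^+_{\prec}(a)=\sum_i p_i\otimes q_i$ and $\Delta(b)=\sum_j r_j\otimes s_j$; here every $p_i\neq\un$, since $1\in S$ forces $a_S\neq\un$ in \eqref{HAprec+}. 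Whenever $r_j\neq\un$ the argument $p_i|r_j$ is a genuine product of two nontrivial components, on which $\kappa$ vanishes --- by the infinitesimal-character hypothesis in one direction, by the inductive hypothesis (such products have degree $<n$) in the other --- the sole exception being the top term $p_i=a,\ r_j=b$, which contributes $\kappa(a|b)\Phi(\un)=\kappa(a|b)$. The remaining terms, with $r_j=\un$, come only from the summand $\un\otimes b$ of $\Delta(b)$, and the inductive multiplicativity $\Phi(q_i|b)=\Phi(q_i)\Phi(b)$ gathers them into $(\kappa\prec\Phi)(a)\,\Phi(b)=\Phi(a)\Phi(b)$. This yields the master identity
\[
	\Phi(a|b)-\Phi(a)\Phi(b)=\kappa(a|b),
\]
valid modulo the two lower-degree hypotheses. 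Reading it with $\kappa\in g(A)$ gives $\Phi(a|b)=\Phi(a)\Phi(b)$, i.e.\ $\Phi\in G(A)$; reading it with $\Phi\in G(A)$ gives $\kappa(a|b)=0$, i.e.\ $\kappa\in g(A)$; an induction on $n$ closes both.

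I expect the only real difficulty to be bookkeeping: passing cleanly between the reduced half-coproducts and their unital ``plus'' versions, and isolating exactly which summands $\kappa(p_i|r_j)$ survive. The structural input is deliberately minimal --- only the left compatibility \eqref{D1} of the unshuffle bialgebra $\overline T(T(A))$ (Theorem \ref{thm:bialg}) and the defining vanishing $\kappa(a|b)=0$ --- so once the master identity is isolated both implications, and hence the bijection $B=(\exp^{\prec})^{-1}$, follow at once; the right half-shuffle $\succ$ never enters, since the recursion defining $\Phi$ uses only $\prec$.
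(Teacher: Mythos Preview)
Your argument is correct, and the master identity $\Phi(a|b)-\Phi(a)\Phi(b)=\kappa(a|b)$ is a clean unifying device. The direction $\kappa\in g(A)\Rightarrow\Phi\in G(A)$ matches the paper's inductive argument essentially verbatim. In the converse direction, however, the paper takes a shorter, induction-free route: rather than proving $\kappa(a|b)=0$ degree by degree, it sets $\mu:=Res(\kappa)$ (the restriction of $\kappa$ to $T(A)$, extended by zero on the higher tensor powers), observes that $\mu$ is \emph{automatically} an infinitesimal character, and then checks directly that $\mu$ also solves $\Phi=e+\mu\prec\Phi$. The point is that since $\mu$ kills everything outside $T(A)$, the only surviving contribution to $(\mu\prec\Phi)(w_1|\cdots|w_n)$ comes from the $\un\otimes w_i$ summand of each $\Delta(w_i)$ for $i\geq 2$, collapsing the expression to $\mu(w_1^{1,\prec})\Phi(w_1^{2,\prec}|w_2|\cdots|w_n)$; multiplicativity of $\Phi$ then identifies this with $\Phi(w_1|\cdots|w_n)$, and uniqueness from Lemma~\ref{inverse} forces $\kappa=\mu\in g(A)$. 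Your approach has the virtue of symmetry---one computation, read two ways---while the paper's trick trades the second induction for the auxiliary object $\mu$ and a one-line verification. One small wording point: in your direction $\Phi\in G(A)\Rightarrow\kappa\in g(A)$ the step $\Phi(q_i|b)=\Phi(q_i)\Phi(b)$ is not ``inductive'' but simply the character hypothesis; only the vanishing of $\kappa$ on lower-degree products is genuinely inductive there.
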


Let us use in the following the shortcut ``Hopf- or Sweddler-type'' notation $\Delta_\prec(w)=:w^{1,\prec}\otimes w^{2,\prec}$ (which is abusive, but its proper use should not result in wrong equations) .

\begin{proof}
We know from Lemma~\ref{inverse} that the implicit equation $\Phi= e+\kappa\prec \Phi=\exp^{\prec}(\kappa)$ has a unique solution $\kappa$ in $\overline{{\mathcal L}_A}$. Let us consider the infinitesimal character $\mu:=Res(\kappa)$, and let us show that $\mu$ also solves $\Phi=e+\mu\prec \Phi$; the first part of the Theorem will follow.

Indeed, for an arbitrary $a\in T(T(A)), a=w_1|\cdots|w_n$, notice first that by definition of the product $\prec$, and due to the vanishing of $\mu$ on any $T(A)^{\otimes k}$, for $k\not= 1$, we have:
$$
	(\mu\prec \Phi)(a)	=\mu(w_1^{1,\prec})\Phi(w_1^{2,\prec}|w_2|\cdots|w_n)
				=\kappa(w_1^{1,\prec})\Phi(w_1^{2,\prec}|w_2|\cdots|w_n).
$$
We immediately obtain, since 
$$
	\Phi(w_1)	=(e+\kappa\prec \Phi)(w_1)
			=\kappa(w_1^{1,\prec})\Phi(w_1^{2,\prec})
			=\mu(w_1^{1,\prec})\Phi(w_1^{2,\prec})
$$ 
that, for any $i>1$
$$
	\Phi (w_1|\cdots|w_n)	=\Phi(w_1)\Phi(w_2|\cdots|w_n)
					=\mu(w_1^{1,\prec})\Phi(w_1^{2,\prec}|w_2|\cdots|w_n))
					=(e+\mu\prec \Phi)(w_1|\cdots|w_n),
$$
from which the property follows.
 
Conversely:
$$
	\exp^{\prec}(\kappa)(w_1|\cdots|w_n)=(e+\kappa\prec\exp^{\prec}(\kappa))(w_1|\cdots|w_n)
							=\kappa(w_1^{1,\prec})\exp^{\prec}(\kappa)(w_1^{2,\prec}|\cdots|w_n).
$$
Assuming by induction that the property $\exp^{\prec}(\kappa)(w_1'|\cdots|w_k')=\exp^{\prec}(\kappa)(w_1') \cdots\exp^{\prec}(\kappa)(w_k')$ holds for elements $w_1'|\cdots|w_k'\in T(T(A))$ of total degree less than the degree of $w_1|\cdots|w_n$, yields
\begin{eqnarray*}
	\exp^{\prec}(\kappa)(w_1|\cdots|w_n)
				&=&\kappa(w_1^{1,\prec})\exp^{\prec}(\kappa)(w_1^{2,\prec})
								\exp^{\prec}(\kappa)(w_2)\cdots\exp^{\prec}(\kappa)(w_n)\\					
				&=&\exp^{\prec}(\kappa)(w_1)\exp^{\prec}(\kappa)(w_2) \cdots \exp^{\prec}(\kappa)(w_n).
\end{eqnarray*}
\end{proof}


\section{Free Cumulants as infinitesimal characters}
\label{ssect:ncCM}

Recall now the definition of free cumulants \cite{Biane,speiLoth}, which underlies our previous developments. 

Let us start with the physical and probabilistic motivations for their introduction. The present approach appears to be particularly well fitted in this respect. We follow the seminal article by Neu and Speicher  \cite{NeuSpeicher}\footnote{We would like to thank R.~Speicher for pointing us to this work during his visit to ICMAT in 2013.}.

Consider a random evolution equation
$$
	\frac{d U}{d t}(t,t_0)=H(t)U(t,t_0),
$$
where $H(t)$ is a random operator, e.g., the stochastic interaction Hamiltonian associated to the modeling of an open system coupled to a heat reservoir \cite{NeuSpeicher}, or the one associated to a (randomized version) of the interaction Hamiltonian of adiabatic perturbation theory (see e.g.~\cite{BMP}). Such an equation is in general not solvable exactly, and, in practice, one has to simplify the problem (in our case by averaging over the various random solutions) and to eliminate degrees of freedom (by a suitable truncation process).

Writing $\langle U(t,s) \rangle$ for the averaging operator, we get the Picard--Dyson expansion
$$
	\langle U(t,s) \rangle = 1+ \sum\limits_{i=1}^\infty\;\; \idotsint\limits_{t \geq t_1\geq \dots\geq t_i\geq s} \langle H(t_1) \cdots H(t_i) \rangle dt_1\cdots dt_i.
$$
The Ansatz leading to free cumulants is then given by a master equation, which defines implicitly the free cumulants $k_{n+1}(t,t_1,\ldots,t_n)$ by
\begin{equation}\label{master}
	\frac{d}{dt} \langle U(t,s) \rangle = \sum\limits_{i=0}^\infty
	\;\; \idotsint\limits_{t \geq t_1\geq \dots\geq t_i\geq s}
	k_{i+1}(t,t_1,\ldots,t_i) \langle U(t,t_1) \rangle  \langle U(t_1,t_2) \rangle \cdots \langle U(t_i,s) \rangle dt_1 \cdots dt_i.
\end{equation}

Functional derivation shows that this last identity amounts to defining recursively the free cumulants by:
\begin{eqnarray*}
	\langle H(t)H(t_1) \cdots H(t_n) \rangle &=&\sum\limits_{r=0}^n \sum\limits_{\{i(1),\ldots ,i(r)\} \subset \{1,\ldots ,n\}} 
						k_{r+1}(t,t_{i(1)},\ldots ,t_{i(r)}) \langle H(t_1)\cdots H(t_{i(1)-1}) \rangle \cdots \\
					& &\hspace{4cm}\cdots \langle H(t_{i(r)+1})\cdots H(t_n) \rangle .
\end{eqnarray*}

It is well known that classical cumulants behave particularly well with respect to centered Gaussian processes, since in that case all cumulants vanish excepte for the second one. A striking property of free cumulants is, that the same property holds, i.e., all free cumulants vanish for centered processes excepte for the second one, for symmetric random matrix processes. That is, in the asymptotic regime ($N \longrightarrow \infty$), for $A(t)$ the symmetric $N \times N$ random matrix $(A_{ij}(t))$, with the $A_{ij}(t),\ i\leq j$ centered, Gaussian, independent and with the same covariance $\Gamma(t,s)= \langle A_{ij}(t)A_{ij}(s) \rangle$ and for the normalized expectation operator $\langle A(t_1)\cdots A(t_n)\rangle :=\frac{1}{N^{n/2+1}}\langle Tr[A(t_1)\cdots A(t_n)]\rangle $, one gets: $k_n(t_1,\ldots,t_n)=\Gamma(t_1,t_2)$ for $n=2$ and zero else.

There is a striking formal similarity between the definition of free cumulants for such a random process and the definition of the coproduct on $T(T(A))$ in the previous sections of the article. More generally, recall  the abstract definition of free cumulants.
A pair $(A,\phi)$, where $A$ is an associative $k$-algebra with unit and $\phi$ a linear form on $A$, is by definition a non-commutative probability space. The linear form is extended to $T(A)$, for all words $a_1\cdots a_n \in A^{\otimes n}$ 
$$
	\phi(a_1a_2a_3 \cdots a_n) := \phi(a_1\cdot_A a_2\cdot_A  a_3\cdot_A\ \cdots\  \cdot_A a_n).
$$
Viewing $a\in A$ as a non-commutative random variable, the moments of $A$ are defined by
$$
	m_n:=\phi(a^n)=\phi(a^{\otimes n}),
$$
whereas the free cumulants $k_n$ are obtained from the identity
\begin{equation}\label{freec}
	C(zM(z))=M(z),
\end{equation}
with $C(z):=1+\sum_{i\geq 1}k_nz^n,\ \ M(z):=1+\sum_{i\geq 1}m_nz^n$. Equivalently:
$$
	m_n=\sum\limits_{s=1}^n\sum\limits_{i_1 + \cdots + i_s = n-s} k_s m_{i_1} \cdots m_{i_s},
$$
where the $i_j$ run over the positive integers (i.e. the value $i_j=0$ is allowed).

Our main claim is that the fixed point equation (\ref{freec}) is a consequence of the fixed point equation $\Phi=e+\kappa\prec\Phi$ introduced in the previous section. Moreover, the same approach, properly abelianized, holds for classical cumulants, legitimizing in a new way the claim that free cumulants are a non-commutative version of classical cumulants.

To fix the ideas and illustrate concretely the half-shuffle approach, let us start with low-dimensional computations.
Let $\phi$ be the linear form on $T(A)$ associated to a non-commutative probability space $A$, and extended to $\overline T(T(A))$ multiplicatively, ${\Phi} : \overline T(T(A)) \to k$
$$
	\Phi(\un):=1,\ {\Phi}(w_1 | \cdots | w_m) := \phi(w_1) \cdots \phi(w_m). 
$$
Let  $\kappa  : \overline T(T(A)) \to k$ be the infinitesimal character solving the linear fixed point equation
\begin{equation}
\label{linEQ-Moments}
	{\Phi} = e + \kappa \prec {\Phi}. 
\end{equation}

We calculate a few simple examples. Let $a \in A\subset T(A)$. Then $\Delta^+_{\prec}(a)=a \otimes \un$, and hence, with ${\Phi}(\un)=1$
$$
	{\Phi}(a) = \kappa(a)=: k_1.
$$
Next we look at the two letters word $aa \in T_2(A)$. The left-coproduct reads $\Delta^+_{\prec}(aa)=aa \otimes \un + a \otimes a$, such that
$$
	\Phi(aa) = \kappa(aa) + \kappa(a)\kappa(a):=k_2 +k_1k_1.
$$
For $aaa \in T_3(A)$ the left-coproduct reads
$$
	\Delta^+_{\prec}(aaa) = aaa \otimes \un + a \otimes aa + 2 aa \otimes a, 
$$
such that
$$
	\Phi(aaa) = \kappa(aaa) + 3 \kappa(aa)\kappa(a) + \kappa(a) \kappa(a) \kappa(a) 
				=k_3 + 3k_2k_1 + k_1 k_1 k_1 .
$$
Let $aaaa \in T_4(A)$. The left-coproduct reads
$$
	\Delta^+_{\prec}(aaaa) = aaaa \otimes \un + a \otimes aaa + 2 aa \otimes aa + aa \otimes a|a + 3 aaa \otimes a. 
$$
This then gives
$$
	\Phi(aaaa) = \kappa(aaaa) 
					+ 4 \kappa(aaa)\kappa(a) 
						+ 2\kappa(aa) \kappa(aa) + 6\kappa(aa) \kappa(a)\kappa(a)
						+ \kappa(a)\kappa(a) \kappa(a) \kappa(a).
$$
We used that ${\Phi}(a|a)={\Phi}(a)\Phi(a)$. These equations coincide with the moments-cumulants relation for non-crossing partitions up to order four. More generally, we have

\begin{thm}\label{tim:freeprob}
Let $\phi: A \to k$ be a unital map, and $\Phi$ its extension to $\overline T(T(A))$  as above. Let the map $\kappa: \overline T(T(A))\to k$ be the infinitesimal character solving $\Phi = e + \kappa\prec \Phi$. For $a \in A$ we set $k_n:=\kappa(a^{\otimes n})$, $n \geq 1$ and $m_n:=\Phi(a^{\otimes n})=\phi(a^n)$, $n\geq 0$. Then
$$
	m_n=\sum\limits_{s=1}^n\sum\limits_{i_1 + \cdots + i_s = n-s} k_s m_{i_1} \cdots m_{i_s}.
$$
In particular, the $k_n$ identify with the free cumulants of $a \in (A,\phi)$.
\end{thm}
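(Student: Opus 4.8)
The plan is to evaluate the fixed point equation $\Phi = e + \kappa \prec \Phi$ on the single word $a^{\otimes n} = a_1 \cdots a_n \in T_n(A)$ (all letters equal to $a$) and to read the recursion directly off the explicit shape of the left half-coproduct. As in the low-order examples preceding the statement, the half-convolution on the augmented algebra $\overline{\mathcal{L}_A}$ is computed with $\Delta^+_\prec$ from \eqref{HAprec+} (this is the half-coproduct forced by the unit axiom $f \prec e = f$). Since $e$ vanishes on $a^{\otimes n}$ for $n \geq 1$, I obtain
$$
	\Phi(a^{\otimes n}) = (\kappa \prec \Phi)(a^{\otimes n}) = \sum_{1 \in S \subseteq [n]} \kappa(a_S)\, \Phi(a_{J^S_{[n]}}).
$$

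I would then simplify the two families of factors. Each $a_S = a_{s_1} \cdots a_{s_p}$ is a single (bar-free) word, so $\kappa(a_S)$ is a genuine evaluation of the infinitesimal character; as all letters equal $a$ it depends only on $s := |S|$, giving $\kappa(a_S) = \kappa(a^{\otimes s}) = k_s$. Since $\Phi$ is multiplicative and $a_{J^S_{[n]}} = a_{J_1} | \cdots | a_{J_r}$ with $J_1, \ldots, J_r$ the connected components of $[n] - S$, I get $\Phi(a_{J^S_{[n]}}) = \phi(a^{|J_1|}) \cdots \phi(a^{|J_r|}) = m_{|J_1|} \cdots m_{|J_r|}$. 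Reorganising the sum according to $s = |S|$ turns it into an outer sum over $1 \le s \le n$ and an inner sum over subsets $S \ni 1$ with $|S| = s$.

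The combinatorial core is to match this inner sum with the claimed sum over compositions. Writing $S = \{1 = s_1 < s_2 < \cdots < s_s\}$, the complement $[n] - S$ consists of the $s$ (possibly empty) gaps of sizes $i_j := s_{j+1} - s_j - 1$ for $1 \le j < s$ and $i_s := n - s_s$; these satisfy $i_j \ge 0$ and $i_1 + \cdots + i_s = n - s$, and $S \mapsto (i_1, \ldots, i_s)$ is a bijection onto the set of non-negative compositions of $n - s$ into $s$ parts. An empty gap produces no connected component, matching the convention $m_0 = \phi(1_A) = 1$, so the product of moments over the actual components equals $m_{i_1} \cdots m_{i_s}$ once the empty factors are read as $1$. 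This yields exactly
$$
	\Phi(a^{\otimes n}) = \sum_{s=1}^n \sum_{i_1 + \cdots + i_s = n-s} k_s\, m_{i_1} \cdots m_{i_s}.
$$
There is no deep obstacle here; the only points demanding care are this bookkeeping of empty gaps (and the boundary case $S = [n]$, which supplies the pure term $k_n$ with all $i_j = 0$), together with confirming that the half-convolution is indeed computed with $\Delta^+_\prec$ rather than $\Delta_\prec$ on the augmented algebra.

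To conclude that the $k_n$ are the free cumulants, I would note that the displayed identity is precisely the recursion equivalent to \eqref{freec}. Since this system is triangular -- the coefficient of $k_n$ in the expression for $m_n$ is $m_0^{\,n} = 1$ -- it determines $(k_n)_{n \ge 1}$ uniquely from $(m_n)_{n \ge 0}$, so the $k_n$ coming from the fixed point equation coincide with Speicher's free cumulants.
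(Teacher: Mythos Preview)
Your proof is correct and follows essentially the same route as the paper: both arguments evaluate the fixed point equation on $a^{\otimes n}$, invoke the bijection between subsets $S\ni 1$ of $[n]$ and non-negative compositions $(i_1,\ldots,i_s)$ of $n-s$ given by the gap lengths, and use multiplicativity of $\Phi$ together with $m_0=1$ to absorb empty gaps. Your extra care in distinguishing $\Delta^+_\prec$ from $\Delta_\prec$ and in noting the triangularity of the system to conclude uniqueness of the $k_n$ are welcome clarifications, but do not constitute a different approach.
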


\begin{proof}
Indeed, notice first that subsets $\{1=s_1,\ldots,s_i\}=S \subset [n]$ are in bijection with sequences of (possibly null) integers of length $i$, $s_2 - s_1 - 1,\ldots, s_i - s_{i-1} -1, n-s_i$, and of total sum $n-i$. The nonzero terms of the sequence compute the lengths of the connected components of $[n]-S$ in $[n]$. We get
$$
	\Delta_\prec(a^{\otimes n})=\sum\limits_{i=1}^n a^{\otimes i} \otimes \sum\limits_{j_1+\cdots+j_i=n-i}
	a^{\otimes j_1}| \cdots |a^{\otimes j_i},
$$
with the convention that tensor powers $a^{\otimes 0}$ have to be ignored.

Applying this to $\Phi(a^{\otimes n})=(e+\kappa\prec \Phi)(a^{\otimes n})$, we get the expected identity 
$$
	m_n=\sum\limits_{s=1}^n\sum\limits_{i_1+ \cdots +i_s=n-s}k_sm_{i_1} \cdots m_{i_s}.
$$
 \end{proof}

Similar results hold for free cumulants over several variables. This can be deduced from the recursive definition of free cumulants following from the master equation (\ref{master}), but we prefer to detail the proof starting from the common definition of free cumulants in terms of non-crossing partitions.

Recall that a partition $\pi= \{P_1,\ldots ,P_k\},\ P_1\coprod \cdots \coprod  P_k=[n]$ is called non-crossing if and only if there are no $i,j,k,l$ in $[n]$, such that $i,k$ and $j,l$ belong to two disjoint blocks $P_{i_1}$, $P_{i_2}$ of the partition and $i<j<k<l$. We will assume that the $P_i$ are ordered according to their minimal element (${\rm{inf}}(P_1)=1 < {\rm{inf}}(P_2)< \cdots < {\rm{inf}}(P_k))$. The set of non-crossing partitions of $[n]$ is written $NC(n)$. For $\pi\in NC(n)$ as above, $a_1,\ldots ,a_n\in A$ and $\mu$ a linear form on $T(A)$, we write in general $\mu^\pi(a_1, \ldots ,a_n):= \prod_{i=1}^k\mu(a_{P_i})$. The generalized non-crossing cumulants $R(a_1,\ldots,a_n)$ associated to a unital map $\phi: A \to k$ are then the multilinear maps defined by the implicit equations (that can be solved recursively):
$$
	\phi(a_1\cdots a_n)=:\sum\limits_{\pi\in NC(n)}R^\pi(a_1,\ldots,a_n).
$$

\begin{thm}\label{gencumulant}
Let $\phi: A \to k$ be a unital map, and $\Phi$ its extension to $\overline T(T(A))$  as above. Let the map $\kappa: \overline T(T(A))\to k$ be the infinitesimal character solving $\Phi = e + \kappa\prec \Phi$. For $a_1,\ldots,a_n \in A$, we have: $\kappa(a_1\cdots a_n)=R(a_1,\ldots,a_n)$. That is, the infinitesimal character $\kappa$ computes the generalized non-crossing cumulants associated to $\phi$.
\end{thm}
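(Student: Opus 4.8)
The plan is to prove $\kappa(a_1\cdots a_n)=R(a_1,\ldots,a_n)$ by induction on $n$, by showing that $\kappa$ and $R$ obey one and the same linear recursion extracted from $\phi$. First I would unwind the fixed point equation $\Phi=e+\kappa\prec\Phi$ on a word $a_1\cdots a_n\in T(A)$. Exactly as in the low-dimensional computations preceding the theorem, the unit convention $\kappa\prec e=\kappa$ together with $\Phi(\un)=1$ repackages the half-convolution through $\Delta^+_{\prec}$, giving the operative recursion
$$
	\Phi(a_1\cdots a_n)=\sum_{1\in S\subseteq[n]}\kappa(a_S)\,\Phi(a_{J^S_{[n]}}).
$$
The term $S=[n]$ contributes $\kappa(a_1\cdots a_n)$, whereas every term with $S\subsetneq[n]$ involves $\kappa$ and $\Phi$ evaluated on words strictly shorter than $n$.

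Second, I would produce the matching recursion for $R$ from its defining equation $\phi(a_1\cdots a_n)=\sum_{\pi\in NC(n)}R^\pi(a_1,\ldots,a_n)$ by classifying the non-crossing partitions according to the block $B_1$ containing $1$. The combinatorial heart of the argument is the claim that, once $B_1=S$ is fixed, every other block of $\pi$ lies inside a single connected component $J_i$ of $[n]-S$, and conversely that any choice of a non-crossing partition on each $J_i$ yields a non-crossing partition of $[n]$. The forward direction is the usual four-point argument: if a block $B\neq S$ met two gaps, with $x\in J_i$, $y\in J_j$, $i<j$, then for consecutive elements $s_i<s_{i+1}$ of $S$ one would have $s_i<x<s_{i+1}<y$ with $\{s_i,s_{i+1}\}\subset S$ and $\{x,y\}\subset B$, a crossing. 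The converse is immediate since blocks inside one gap are nested below $S$ and blocks in distinct gaps are separated.

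Granting this decomposition, and using that for each gap $\sum_{\pi_i\in NC(J_i)}R^{\pi_i}(a_{J_i})=\phi(a_{J_i})$ (the defining equation of $R$ applied to the shorter word $a_{J_i}$), the sum over $NC(n)$ factorizes, by $R^\pi=\prod_{B\in\pi}R(a_B)$, as
$$
	\phi(a_1\cdots a_n)=\sum_{1\in S\subseteq[n]}R(a_S)\prod_i\phi(a_{J_i})
	=\sum_{1\in S\subseteq[n]}R(a_S)\,\Phi(a_{J^S_{[n]}}),
$$
the last equality holding because $\Phi$ is multiplicative, so that $\Phi(a_{J^S_{[n]}})=\prod_i\phi(a_{J_i})$. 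Thus $R$ satisfies the very same recursion as $\kappa$, driven by the same data $\phi=\Phi|_{T(A)}$.

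Finally, comparing the two displays and using $\Phi(a_1\cdots a_n)=\phi(a_1\cdots a_n)$, every summand indexed by $S\subsetneq[n]$ coincides by the induction hypothesis $\kappa(a_S)=R(a_S)$ (here $|S|<n$), the base case $n=1$ being $\kappa(a_1)=\phi(a_1)=R(a_1)$. Cancelling the matching terms forces equality of the remaining $S=[n]$ contributions, that is $\kappa(a_1\cdots a_n)=R(a_1,\ldots,a_n)$, which closes the induction. I expect the only genuinely non-routine step to be the combinatorial lemma on the structure of $NC(n)$, namely that fixing the block through $1$ reduces a non-crossing partition to independent non-crossing partitions on the connected components of its complement; this is what makes the partition sum factorize in lockstep with the splitting $\Delta^+_{\prec}$. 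Everything else is the bookkeeping that identifies the recursion for $\Phi$ with the one for $\phi$.
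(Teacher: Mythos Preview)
Your proposal is correct and follows essentially the same approach as the paper: both proceed by induction on $n$, unwind the fixed point equation via $\Delta^+_\prec$ to the sum over $S\ni 1$, invoke the same combinatorial bijection between $NC(n)$ and pairs $(S,(\pi_i)_i)$ with $\pi_i\in NC(J_i)$, and conclude by matching the two expansions of $\phi(a_1\cdots a_n)$. The only cosmetic difference is that the paper substitutes the induction hypothesis into the $\kappa$-expansion to show that $\kappa$ satisfies the defining relation of $R$, whereas you derive the recursion for $R$ and compare it to the one for $\kappa$; this is the same argument read in the opposite direction.
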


\begin{proof}
Let us prove the theorem by induction on $n$. We assume that the multilinear map $R$ computing generalized non-crossing cumulants  agrees with the solution $\kappa$ of $\Phi=e+\kappa\prec \Phi$ on sequences $a_1,\ldots,a_i$ of length strictly less than $n$. We then have:
$$
	\Phi(a_1\cdots a_n)=\phi(a_1\cdots a_n)=\sum_{1\in S\subseteq  [n]}\kappa(a_S)\phi(a_{J_1})\cdots \phi(a_{J_{k(S)}}),
$$
where the $J_i$ are as usual the connected components of $[n]-S$, and $k(S)$ stands for the number of such components. We set $j_i:=|J_i|$. Using the induction hypothesis, we obtain:
$$
	\phi(a_1\cdots a_n)=\sum_{1\in S\subseteq  [n]}\kappa(a_S)\prod\limits_{i=1}^{k(S)}\Big(\sum\limits_{\pi_i\in NC(j_i)}\kappa^{\pi_i}(a_{J_i})\Big).
$$
However, it follows immediately from the definition of non-crossing partitions, that there is a canonical bijection between $NC(n)$ and the set of sequences $\{(S,\pi_1,\ldots,\pi_{k(S)})\}$, where $S$ runs over subsets of $[n]$ containing $1$ and the $\pi_i$ run over the non-crossing partitions of the connected components of $[n]-S$. Finally, we obtain:
$$
	\phi(a_1\cdots a_n)=\sum\limits_{\pi\in NC(n)}\kappa^\pi(a_1,\ldots,a_n),
$$
from which $\kappa=R$ on $T(A)$ follows, and hence the Theorem.
\end{proof}


\section{Cluster properties}
\label{sect:cluster}

An expected property of cumulant expansions is that they should behave in some sense meaningful from a physics point of view, that is, they should respect causality of the underlying system (by which we mean that independence properties should translate into the vanishing of corresponding cumulants). Neu and Speicher proved that this is indeed how free cumulants behave for random systems such as the ones studied at the beginning of the previous section, i.e., free cumulants associated to averages of solutions of random evolution equations. In this section, we show briefly that the argument presented in \cite{NeuSpeicher} holds, mutatis mutandis, on $T(T(A))$.

Let $\phi: T(A) \to k$ be a unital map, and $\Phi$ its multiplicative extension to $\overline T(T(A))$, $\Phi (w_1| \cdots |w_n):=\phi(w_1)\cdots \phi(w_n)$. Let the map $\kappa: \overline T(T(A))\to k$ be the infinitesimal character solving $\Phi = e + \kappa\prec \Phi$. We say that the cluster property holds for $B$ and $C$ two subsets of $A$, if and only if, for arbitrary elements  $b_1,\ldots ,b_n\in B$, $c_1,\ldots ,c_m \in C$,
$$
	\phi(b_1\cdots b_n c_1\cdots c_m)=\phi(b_1\cdots b_n)\phi(c_1\cdots c_m).
$$

\begin{prop}
For two subsets $B,C$ of $A$ satisfying the cluster property, the infinitesimal character $\kappa$ satisfies
$$
	\kappa(b_1\cdots b_n c_1\cdots c_m)=0
$$
for arbitrary $b_1,\ldots ,b_n \in B$ and  $c_1,\ldots ,c_m \in C$.
\end{prop}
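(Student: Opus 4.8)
The plan is to prove the statement by induction on the total length $N := n+m$ of the word $w := b_1\cdots b_n c_1\cdots c_m$, unfolding the fixed point equation $\Phi = e + \kappa\prec\Phi$ into the moment-cumulant expansion. Concretely, exactly as in the proof of Theorem~\ref{gencumulant}, the left half-coproduct yields
$$
	\phi(b_1\cdots b_n c_1\cdots c_m) = \sum_{1 \in S \subseteq [N]} \kappa(w_S)\, \phi(w_{J_1})\cdots \phi(w_{J_{k(S)}}),
$$
where $J_1,\ldots,J_{k(S)}$ are the connected components of $[N]-S$ and $w_S,\ w_{J_i}$ denote the corresponding subwords of $w$. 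My inductive hypothesis is that $\kappa$ vanishes on every \emph{mixed} word (one containing at least one letter of $B$ and one of $C$, the former preceding the latter) of length strictly less than $N$. Since any subword $w_S$ of $w$ automatically keeps all its $B$-letters to the left of its $C$-letters, the hypothesis does apply to every proper sub-block occurring in the expansion.

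First I would isolate the term $S = [N]$, which contributes precisely $\kappa(w)$. Among the remaining subsets $1 \in S \subsetneq [N]$ the subword $w_S$ is either mixed or pure. If $w_S$ is mixed then $|S| < N$, so $\kappa(w_S) = 0$ by the inductive hypothesis and the term drops out. A pure subword consisting only of $C$-letters cannot occur, since $1 \in S$ while position $1$ carries the $B$-letter $b_1$ (here $n\geq 1$). Hence, apart from $\kappa(w)$, the only surviving contributions come from subsets $S \subseteq [n]$ lying entirely among the $B$-positions, for which $\kappa(w_S) = \kappa(b_S)$.

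The crucial step is the analysis of the connected components of $[N]-S$ for such a pure-$B$ subset $S$. Because all $C$-positions $\{n+1,\ldots,N\}$ lie in $[N]-S$ and are consecutive and terminal, they form the tail of the \emph{last} connected component, which is therefore the subword $b_{l+1}\cdots b_n c_1\cdots c_m$, where $\{l+1,\ldots,n\}$ is the maximal run of $B$-positions not in $S$ ending at $n$ (possibly empty). Applying the cluster property to this (mixed, or purely $C$) component factors off $\phi(c_1\cdots c_m)$, while every other component is a pure-$B$ subword; one checks in both cases $n\in S$ and $n\notin S$ that $\phi(w_{J_1})\cdots\phi(w_{J_{k(S)}}) = \phi(c_1\cdots c_m)\prod_{J'}\phi(b_{J'})$, the product now running over the connected components $J'$ of $[n]-S$ in $[n]$. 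Factoring $\phi(c_1\cdots c_m)$ out of the whole sum, the residual $\sum_{1\in S\subseteq[n]}\kappa(b_S)\prod_{J'}\phi(b_{J'})$ is exactly the moment-cumulant expansion of $\phi(b_1\cdots b_n)$. Thus the surviving terms total $\phi(b_1\cdots b_n)\phi(c_1\cdots c_m)$, which by the cluster property equals $\phi(w)$; comparing with $\phi(w) = \kappa(w) + \phi(w)$ gives $\kappa(w)=0$.

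I expect the main obstacle to be the bookkeeping of the connected components, in particular verifying carefully that the cluster hypothesis is precisely what is needed to detach $\phi(c_1\cdots c_m)$ from the mixed final component and that the leftover terms recombine into the genuine moment expansion of $\phi(b_1\cdots b_n)$. The base case $N=2$ (where $w = b_1 c_1$) is immediate from $\phi(b_1 c_1) = \kappa(b_1 c_1) + \kappa(b_1)\phi(c_1)$ together with $\phi(c_1)=\kappa(c_1)$ and the cluster identity $\phi(b_1 c_1)=\phi(b_1)\phi(c_1)$; in fact it is subsumed by the inductive step, the hypothesis then being used vacuously.
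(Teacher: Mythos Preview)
Your proof is correct and follows essentially the same route as the paper's: induction on the total length, expansion via the left half-coproduct, elimination of mixed proper subwords by the inductive hypothesis, factoring off $\phi(c_1\cdots c_m)$ from the last connected component via the cluster property, and recognition of the residual sum as the moment-cumulant expansion of $\phi(b_1\cdots b_n)$. Your treatment of the connected components (distinguishing $n\in S$ from $n\notin S$) is in fact more explicit than the paper's, which passes over that step in one line.
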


\begin{proof}
Let us prove the theorem by induction on $k=n+m$. For $k=2$, that is, for $n=m=1$, we see that 
$$
	\kappa(b_1c_1)	=\phi(b_1c_1) - \kappa(b_1)\phi(c_1)
				=\phi(b_1)\phi(c_1) - \phi(b_1)\phi(c_1)
				=0.
$$

Assuming that the property holds for all $k<n+m$, we get from $\Phi - e = \kappa +\kappa\prec(\Phi-e)$, and for $a_1:=b_1,\ldots,a_n:=b_n \in B$, $a_{n+1}:=c_1,\ldots,a_{n+m}:=c_m\in C$,
$$
	X:=\kappa(b_1\cdots b_n c_1\cdots c_m) - \phi(b_1\cdots b_n c_1\cdots c_m)
		=- \sum\limits_{S \subsetneq [n+m] \atop 1 \in S} \kappa(a_S)\Phi(a_{J_{[n+m]}^S}).
$$
However, by the induction hypothesis, $\kappa(a_S)$ vanishes, whenever $S \cap \{n+1,\ldots ,n+m\} \not=\emptyset$, and we obtain:
$$
	X=-\sum\limits_{S \subset [n] \atop 1\in S} \kappa(a_S)\Phi(a_{J_{[n+m]}^S}),
$$
and, since the cluster property holds for $B,C$, we find
$$
	X=-\sum\limits_{S\subset [n] \atop 1\in S} \kappa(a_S)\Phi(a_{J_{[n]}^S})\phi(a_{n+1}\cdots a_{n+m}).
$$
Finally, $\phi(a_1\cdots a_n)=\sum_{S\subset [n], 1\in S}\kappa(a_S)\Phi(a_{J_{[n]}^S})$ and 
$$
	\kappa(b_1\cdots b_n c_1\cdots c_m)=\phi(b_1\cdots b_n c_1\cdots c_m) -\phi(b_1\cdots b_n)\phi(c_1\cdots c_m)=0,
$$
which concludes the proof of the proposition.
\end{proof}


\section{Classical Cumulants from half-unshuffles.}
\label{sect:CM}

As the notion of unshuffle coalgebra is dual to the one of shuffle algebra, one can dualize the notion of a commutative shuffle algebra. It is an unshuffle coalgebra, in which $\Delta_\prec = \tau\circ\Delta_\succ$. Here $\tau$ denotes the twist map, $\tau(x \otimes y):=y\otimes x$. We will call an unshuffle bialgebra satisfying this property a  {\it{cocommutative unshuffle bialgebra}}\footnote{We feel that this terminology is more natural than the one of cozinbiel Hopf algebra used, e.g., in Fischer's thesis, to which we refer for details on the subject \cite{fischer}.}.
 
In the following, $A$ is an arbitrary commutative unital associative $k$-algebra equipped with a linear form $\phi : A \to k$ extended, as in the previous section, to a unital linear form on $\overline T(A)$ by $\phi(a_1a_2 \cdots a_n):=\phi(a_1 \cdot_A a_2\cdot_A  \cdots \cdot_A a_n)$. However, since our interest is oriented toward the moment/cumulant relation, the reader should have in mind for $A$ an algebra of scalar random variables admitting moments of all orders, and for $\phi={\mathbf E}$ the expectation operator. In the later case, the moments of $a\in A$ are given by $m_n:={\mathbf E}(a^n)$, and the series of cumulants $c_n$ is determined through
$$
	\sum\limits_{n\geq 1} c_n \frac{z^n}{n!}:= \log(\sum\limits_{n\geq 0}m_n \frac{z^n}{n!}).
$$
We will use, however, the equivalent definition of cumulants by means of the equations
\begin{equation}\label{cummom}
	m_n = c_n + \sum\limits_{m=1}^{n-1}{{n-1}\choose{m-1}} c_m m_{n-m}.
\end{equation}

The cocommutative unshuffle bialgebra structure on $\overline T(A)$ is defined by dualizing the one of a commutative shuffle algebra on the tensor algebra over $A$. We refer to \cite{Reutenauer} for details on the shuffle product on the tensor algebra. Concretely, the cocommutative coproduct is given by $\Delta^{\!\!\sh} : \overline T(A) \to \overline T(A) \otimes \overline T(A)$
$$
	\Delta^{\!\!\sh}(a_1 \cdots a_n):=\sum_{J \subseteq [n] } a_J \otimes a_{[n] - J}.
$$
This coproduct splits into left and right half-coproducts
\begin{equation}
\label{haprec+}
	\Delta^{\!\!\sh}_{\prec}(a_1 \cdots a_n) := \sum_{1 \in J \subseteq [n]} a_J \otimes a_{[n] - J}
\end{equation}
and 
\begin{equation}
\label{haprec}
	\Delta^{\!\!\sh}_{\succ}(a_1 \cdots a_n) := \tau\circ \Delta^{\!\!\sh}_{\prec}(a_1 \cdots a_n)
								   = \sum_{1 \notin J \subset [n]} a_J \otimes a_{[n] - J}.
\end{equation}
Together with the concatenation product, these maps define a structure of a cocommutative unshuffle bialgebra on $\overline T(A)$.

Notice that the relation $\Delta^{\!\!\sh}_{\succ}= \tau\circ \Delta^{\!\!\sh}_{\prec}$ implies that, for arbitrary $f,g\in Lin(\overline  T(A),k)$, we have:
$$
	f\prec g=g\succ f,
$$
with the usual conventions $f\prec g:=m_k\circ (f\otimes g)\circ \Delta^{\!\!\sh}_\prec$ respectively  $f\succ g:=m_k\circ (f\otimes g)\circ \Delta^{\!\!\sh}_\succ$,  so that $Lin(\overline T(A),k)$ is a commutative shuffle algebra for the left and right half-convolution products $\prec ,\succ$.

Let  now $\phi : \overline T(A) \to k$ be a unital map in $Lin(\overline T(A),k)$, and consider the linear fixed point equation
\begin{equation}
\label{classical-CM}
	\phi = e + c \prec \phi .  
\end{equation}

Here, the map $e : \overline T(A) \to k$ is the identity map on $A^{\otimes 0}$, and the null map on the other tensor powers of $A$. Let us calculate a few examples. Let $a \in T(A)$ be a single letter different from the empty word. Then $\Delta^{\!\!\sh}_{\prec}(a)=a \otimes \un$, and hence with $\phi(\un)=1$
$$
	\phi(a) = c(a):= c_1.
$$
Next we look at the word $aa \in T_2(A)$. The left-coproduct $\Delta^{\!\!\sh}_{\prec}(aa)=aa \otimes \un + a \otimes a$, such that
$$
	\phi(aa) = c(aa) + c(a)c(a):=c_2 + c_1c_1.
$$
For $aaa \in T_3(A)$ the left-coproduct reads
$$
	\Delta^{\!\!\sh}_{\prec}(aaa) = aaa \otimes \un + a \otimes aa + 2 aa \otimes a, 
$$
such that
$$
	\phi(aaa) = c(aaa) + 3 c(aa)c(a) + c(a) c(a) c(a) = c_3 + 3c_2c_1 + c_1 c_1 c_1 .
$$
Let $aaaa \in T_4(A)$. The left-coproduct reads
$$
	\Delta^{\!\!\sh}_{\prec}(aaaa) = aaaa \otimes \un + a \otimes aaa + 3 aa \otimes aa  + 3 aaa \otimes a. 
$$
This then gives
$$
	\phi(aaaa) = c(aaaa) 
					+ 4 c(aaa)c(a) 
						+ 3c(aa) c(aa) + 6c(aa) c(a)c(a)
						+ c(a)c(a)c(a)c(a)
$$
These identities coincide with the moments-cumulants relations up to order four.  

In general, we have $$
	\Delta^{\!\!\sh}(a^{\otimes n}) =\sum\limits_{S\subseteq [n]} a^{\otimes |S|} \otimes a^{\otimes n-|S|}
					=\sum\limits_{i=0}^n{n\choose i}a^{\otimes i}\otimes a^{\otimes n-i}
$$
and 
$$
	\Delta_\prec^\shuffle (a^{\otimes n})=\sum\limits_{1\in S\subset [n]}a^{\otimes |S|}\otimes a^{\otimes n-|S|}
						=\sum\limits_{i=0}^{n-1}{n-1\choose i}a^{\otimes i+1}\otimes a^{\otimes n-i-1},
$$
from which, with $c_n:=c(a^{\otimes n})$ and $m_n:=\phi(a^{\otimes n})=\phi(a^n)$, one finds that \eqref{classical-CM} gives the moment-cumulant relation
\begin{equation}
\label{classical-CM-Bell}
	m_n	= \phi(a^{n}) 
		= \sum_{j=0}^{n-1} {n-1 \choose j} c_{j+1} \phi(a^{n-j-1})
		= \sum_{j=0}^{n-1} {n-1 \choose j} c_{j+1} m_{n-j-1},    
\end{equation}

The same argument shows that the solution $c$ to the equation $\phi = e + c \prec \phi$ also computes the joint cumulants of a family $X_1,\ldots ,X_n$ of scalar random variables. This follows, e.g., from the fact that the generating series $G(\lambda_1,\ldots,\lambda_n)$ of joint cumulants of such a family is $\log {\mathbf E} (e^{\sum\limits_{i=1}^n\lambda_iX_i})$, so that combinatorial properties of joint cumulants reduce automatically to the ones of cumulants in a single variable (this argument does not hold for free cumulants, due to the non-commutativity of the algebras of random variables in free probabilities).


\section{Exponentials and cumulants}
\label{sect:pL-Magnus}

Recall that the series of moments and cumulants are related by the logarithm and exponential maps. We explain why this nice relationship breaks down in the non-commutative framework of free probabilities. Notations are as in the previous sections.

Let us return to Theorem \ref{thm:Gg}, and recall from \cite{dendeq,EFM}, that the solution of the linear fixed point equation
\begin{equation}
\label{dendeq}
	\Phi = e + \kappa \prec \Phi
\end{equation} 
is also given in terms of the proper exponential (\ref{ExpLog}). Indeed, it can be shown that  
$$
	\Phi = \exp^\shuffle\!\!\big(\Omega'(\kappa)\big);
$$ 
where $\Omega'(\kappa)$ is called pre-Lie Magnus expansion and obeys the following recursive equation
$$
	\Omega'(\kappa) = \frac{L_{\Omega' \rhd}}{\exp(L_{\Omega' \rhd})-1}(\kappa)
            =\sum\limits_{m\ge 0} \frac{B_m}{m!}\ L^{m}_{\Omega' \rhd}(\kappa).
$$
Here, the $B_l$'s are the Bernoulli numbers.
Let us mention that $\Omega'(\kappa)$ can also be understood from the point of view of enveloping algebras of pre-Lie algebras \cite{chappat}.  We recall that $L_{a \rhd}(b):= a \rhd b = a \succ b - b \prec a$, where the product $a \rhd b$ satisfies the pre-Lie relation (\ref{pLrel}). See \cite{Manchon} for details.

Let us turn now to the commutative case, that  is, $a\succ b=b\prec a$, so that $a\rhd b=0$. These equations hold for instance in the commutative shuffle algebra $D:=Lin(\overline T(A),k)$ where $\overline T(A)$ is the cocommutative unshuffle bialgebra over a commutative algebra $A$. 
We use the notations of the previous section, and consider now the linear equation $\phi = e + c \prec \phi$.
Since, in general,
$$
	\Omega'(\kappa) = \kappa - \frac{1}{2} \kappa \rhd \kappa + \sum\limits_{m\ge 2} \frac{B_m}{m!}\ L^{m}_{\Omega' \rhd}(\kappa), 
$$
$\Omega'$ reduces in that case to the identity map, i.e., $\Omega'(c) = c$. Hence, in a commutative shuffle algebra the exponential solution of (\ref{dendeq}) reduces to 
$$
	\phi= \exp^\shuffle(c).
$$ 
This phenomenon is strictly analogous to what happens with ordinary scalar and matrix first order linear differential equations. Indeed, the first ones are solved by the exponential map, whereas the latter are solved by means of the Magnus formula, see e.g. \cite{EFM} for details.
 
 Let us focus now on the case $D=Lin(T(A),k)$ and show how this last formula expands combinatorially, which allows to recover the usual exponential computation of the generating series of moments from the one of cumulants. Notice first that, due to the set theoretical definition of the unshuffle coproduct $\Delta^\shuffle$ in $\overline T(A)$, for an arbitrary $a\in A$, we have
$$
	c\shuffle c(a^{\otimes n})	=m_k(c \otimes c)\Delta^\shuffle (a^{\otimes n})
						=\sum\limits_{p+q=n}{{n}\choose{p}}c(a^{\otimes p})c(a^{\otimes q})
						=\sum\limits_{p+q=n}{{n}\choose{p}}c_pc_q,
$$ 
and more generally 
$$
	c^{\shuffle k}(a^{\otimes n})=\sum\limits_{i_1+ \cdots +i_k=n} {{n}\choose{i_1,i_2,\ldots ,i_k}}c_{i_1} \cdots c_{i_k}.
$$
 We get
 $$
 	m_n=\phi(a^{\otimes n})=\sum\limits_{k\geq 1}\frac{1}{k!}\sum\limits_{i_1+ \cdots +i_k=n}
							{{n}\choose{i_1,i_2,\ldots ,i_k}}c_{i_1} \cdots c_{i_k},
$$
which is the degree $n$ component of the cumulant/moment relation (\ref{cummom}).
 
In conclusion, for a given non-commutative probability space $(A,\phi)$, the character $\Phi \in G(A) \subset Lin(\overline T(T(A)),k)$, which is defined as a multiplicative extension of the moment linear form $\phi$, can be written as the solution $\Phi = \exp^\shuffle\!\!\big(\Omega'(\kappa)\big)$ of the linear fixed point equation (\ref{linEQ-Moments}). The infinitesimal character $\kappa \in g(A)$ defines free cumulants. The classical analog of this situation is defined over $Lin(\overline T(A),k)$. The moment map is then given in terms of the cumulants map via the commutative exponential, $\Phi= \exp^\shuffle(c)$. From this perspective, the difference between free and classical cumulants-moments is once again displayed in the non-commutative and commutative $k$-algebras $Lin((\overline T(T(A)),k)$ and $Lin((\overline T(A),k)$, respectively.  \\


\end{document}